\newtheorem{theorem}{Theorem}[section]
\newtheorem{remark}[theorem]{Remark}
\newtheorem{lemma}[theorem]{Lemma}
\newcommand{\R}{\mathbb{R}}
\newcommand{\de}{\,\text{d}}
\newcommand{\neQ}{\boldsymbol{Q}}
\newcommand{\neR}{\boldsymbol{R}}
\title{A boundary integral algorithm for the Laplace Dirichlet-Neumann
  mixed eigenvalue problem }
\author{Eldar Akhmetgaliyev$^*$, Oscar Bruno\thanks{Computing \& Mathematical Sciences, California Institute of Technology}, Nilima Nigam \thanks{Department of Mathematics, Simon Fraser University}}
\begin{document}
\maketitle

\begin{abstract}
  We present a novel integral-equation algorithm for evaluation of
  {\it Zaremba eigenvalues and eigenfunctions}, that is, eigenvalues
  and eigenfunctions of the Laplace operator with mixed
  Dirichlet-Neumann boundary conditions; of course, (slight
  modifications of) our algorithms are also applicable to the pure
  Dirichlet and Neumann eigenproblems. Expressing the eigenfunctions
  by means of an ansatz based on the single layer boundary operator,
  the Zaremba eigenproblem is transformed into a nonlinear equation
  for the eigenvalue $\mu$.  For smooth domains the singular structure
  at Dirichlet-Neumann junctions is incorporated as part of our
  corresponding numerical algorithm---which otherwise relies on use of
  the cosine change of variables, trigonometric polynomials and, to
  avoid the Gibbs phenomenon that would arise from the solution
  singularities, the Fourier Continuation method (FC). The resulting
  numerical algorithm converges with high order accuracy without
  recourse to use of meshes finer than those resulting from the cosine
  transformation. For non-smooth (Lipschitz) domains, in turn, an
  alternative algorithm is presented which achieves high-order
  accuracy on the basis of graded meshes. In either case, smooth or
  Lipschitz boundary, eigenvalues are evaluated by searching for zero
  minimal singular values of a suitably stabilized discrete version of
  the single layer operator mentioned above. (The stabilization
  technique is used to enable robust non-local zero searches.) The
  resulting methods, which are fast and highly accurate for high- and
  low-frequencies alike, can solve extremely challenging
  two-dimensional Dirichlet, Neumann and Zaremba eigenproblems with
  high accuracies in short computing times---enabling, in particular,
  evaluation of thousands of eigenvalues and corresponding
  eigenfunctions for a given smooth or non-smooth geometry with nearly
  full double-precision accuracy.
\end{abstract}

\noindent{\bf Keywords}


Fourier continuation, boundary integral
operators, mixed boundary conditions, Zaremba eigenvalue problem,
Laplace eigenvalue problem.


\pagestyle{plain}



\section{Introduction\label{Introduction}}
This paper presents a novel boundary integral strategy for the  {\em
  numerical solution of the Zaremba eigenproblem}, that is, numerical
approximation of eigenvalues $\lambda_j, j=1,2,...$ and associated
eigenfunctions $u_j \in H^1(\Omega)$ of the Laplace operator under
mixed Dirichlet-Neumann boundary conditions (cf. equation~\eqref{modelA} below); naturally, the main elements of our algorithms are also applicable to the pure Dirichlet and Neumann eigenproblems.

The use of boundary integral equations for the solution of Laplace
eigenproblems has been explored in a number of contributions,
including methods based on collocation~\cite{chenmultiple,kamiya} and
Galerkin~\cite{steinbachunger,steinbachDirichlet} boundary element
approaches for the Dirichlet and Neumann problems. The boundary
element strategy for three-dimensional Dirichlet eigenproblems
presented in~\cite{steinbachDirichlet,steinbachunger}, for example,
yields errors that decrease cubically with the spatial
mesh-sizes. However, as mentioned in~\cite{steinbachDirichlet}, ``the
convergence regions for the eigenvalues are still local'' and ``other
techniques have to be considered and analyzed in order to increase the
robustness''. Focusing on two-dimensional Laplace eigenvalue problems,
in this paper we present a Nystr\"om algorithm that can achieve any
user-prescribed order of convergence for smooth and non-smooth domains
alike, as well as a novel, robust, search algorithm that yields fast
eigenvalue convergence from nonlocal initial guesses---see
Section~\ref{Section:EVP} for details. To the best of our knowledge,
further, the present algorithm is the first boundary-integral method
for eigenvalue problems of Zaremba type.

Integral equation formulations  for eigenvalue problems are
advantageous as they 1)~Result in a
reduction in the problem dimensionality; and, as described in
section~\ref{Sectionbackground}, they 2)~Greatly facilitate efficient treatment of the
eigenfunction singularities that occur around corners and
Dirichlet-Neumann transition points. As a counterpart, however, the
integral form of the eigenvalue problem (cf. equation~\eqref{modelB} below) is nonlinear
(since the eigenvalue appears as part of the integral kernel), and
eigenvalues and eigenfunctions must therefore be found by means of an
appropriate nonlinear equation solver.

It is important to note that the eigenfunctions in
equation~\eqref{modelA} as well as the corresponding densities $\psi$
in~\eqref{modelC} exhibit singularities at corners and
Dirichlet-Neumann junctions.  In particular, in contrast to the
situation for the pure Dirichlet or Neumann eigenfunctions, even for a
smooth boundary $\Gamma$ the eigenfunctions of~\eqref{modelA} are
singular: they are elements of $H^1(\Omega)$ but not of
$H^2(\Omega)$. The specific asymptotic forms of these singularities
for both smooth and Lipschitz domains are described in
Section~\ref{Sectionbackground}.

In Section~\ref{Section:Method} we discuss novel discretization
strategies for our integral formulation of the Zaremba eigenvalue
problem which yield high order accuracy in spite of the poor
regularity of eigenfunctions and densities near Dirichlet-Neumann
junctions. In the smooth domain case our Zaremba eigensolver includes
an adaptation of the novel Fourier Continuation (FC)
method~\cite{bruno2009_1,bruno2009_2,albin2011} (which accurately
expresses non-periodic functions in terms of Fourier series; see
Section \ref{sec:smooth_geometries}) and it explicitly incorporates
the asymptotic behavior of solutions near the Dirichlet-Neumann
junction. For possibly non-smooth curves $\Gamma$, on the other hand,
an approach is introduced in Section \ref{sec:corner_geometries}
which, on the basis of a graded-mesh
discretizations~\cite{sag1964numerical,KUSSMAUL:1969,MARTENSEN:1963,KRESS:1990,COLTON:1998}, yields once again
high-order accuracy--- albeit not as efficiently, for smooth domains,
as that resulting from the FC-based algorithm
(cf. Remark~\ref{remark:cancellations_corner0} and
Section~\ref{sec:experiment1}).

A method for solution of Dirichlet eigenproblems for the Laplace
operator that, like ours, is based on detection of parameter values
for which a certain matrix is not invertible, was introduced
in~\cite{Fox1967}.  In that early contribution this {\em Method of
  Particular Solutions} (MPS) (which approximates eigenfunctions as
linear combinations of Fourier-Bessel functions) performs the
singularity search via a corresponding search for zeroes of the matrix
determinant. Subsequently,~\cite{molerreport} substituted this
strategy by a search for zeroes of minimum singular values---an idea
which, with some variations, is incorporated as part of the algorithm
proposed presently as well. A modified version of the MPS, which was
introduced in reference~\cite{trefethen2005}, alleviates some
difficulties associated with the conditioning of the method.

As it happens, however, a direct evaluation of the zeroes of the
smallest singular value $\eta_n(\mu)$ of our $n\times n$ discretized
boundary integral operator is highly challenging. Indeed, as shown in
Section~\ref{Section:EVP}, the function $\eta_n(\mu)$ is essentially
constant away from its roots, and therefore descent-based approaches
such as the Newton method fail to converge to the roots of $\eta_n$
unless an extremely fine mesh of initial guesses is used.  A modified
integral equation formulation (with associated smallest singular
values $\widetilde{\eta}_n(\mu)$) is introduced in
Section~\ref{Section:EVP} that, on the basis of ideas introduced
in~\cite{trefethen2005}, successfully tackles this difficulty
(cf. Remark~\ref{tref_betcke}).  As demonstrated in
Section~\ref{sec:numerical_results}, the resulting eigensolvers, which
are fast and highly accurate for high- and low-frequencies alike, can
solve extremely challenging two-dimensional Dirichlet, Neumann and
Zaremba eigenproblems with high accuracies in short computing
times. In particular, as illustrated in Section~\ref{sec:experiment6},
the proposed algorithms can evaluate thousands of Zaremba, Dirichlet
or Neumann eigenvalues and eigenfunctions with nearly full
double-precision accuracy for both smooth and non-smooth domains.
The algorithms presented in this paper can further be generalized to enable evaluation of eigenvalues of multiply-connected domains---for which integral eigensolvers can give rise to spurious resonances~\cite{chen2001boundary,chen2003spurious}. A preview of the capabilities of the generalized method for multiply connected domains is provided in Section~\ref{sec:experiment7}.

The recent contribution~\cite{zhao2014robust} relies on determination
of zeroes of matrix determinants to address, in the the context of the
pure Dirichlet eigenvalue problem, certain challenges posed by search
methods based on use of smallest singular values---which are generally
non-smooth function of $\mu$. As indicated in
Remark~\ref{remark:search_method}, however, a relatively
straightforward sign-changing procedure we use yields singular values
that vary smoothly (indeed, analytically!) with $\mu$, and thus
eliminates difficulties arising from non-smoothness. Note that
generalizations of the present methods to algorithms that rely on
iterative singular-value computations and fast evaluations of the
relevant integral
operators~\cite{bruno2001fast,rokhlin1993diagonal,bleszynski1996aim}
(which should enable solution of higher frequency/three-dimensional
problems) can be envisioned.

This paper is organized as follows: Section~\ref{ProblemSetup} describes the Laplace-Zaremba eigenvalue problem for a class of domains in $\mathbb{R}^2$ and Section~\ref{sec:integral_formulation} puts forth an equivalent boundary integral formulation based on representation of eigenfunctions via single layer potentials. Section~\ref{Sectionbackground} then discusses the singular structure of eigenfunctions and associated integral densities at both smooth and non-smooth Dirichlet-Neumann junctions; these results are incorporated in the high-order numerical quadratures described in Section~\ref{Section:Method}. Section~\ref{Section:EVP} introduces a certain normalization procedure which leads to an efficient eigenvalue-search algorithm. Once eigenvalues and corresponding integral densities have been obtained, the eigenfunctions can be produced with high-order accuracy throughout the spatial domain (including near boundaries) by means of a methodology presented in Section~\ref{Section:eigenfunctions}.  Section~\ref{sec:numerical_results}, finally, demonstrates the accuracy and efficiency of the eigensolvers introduced in this paper with a variety of numerical results.

\section{Preliminaries \label{ProblemSetup}}

We consider the eigenvalue problem
\begin{subeqnarray}\label{modelA}
\slabel{modelA_eq}  -\Delta u  &=& \lambda u,\qquad x\in \Omega\\
\slabel{modelA_bc}  u &=& 0, \qquad x \in \Gamma_D\\
\slabel{modelA_bc1}  \frac{\partial u}{\partial \nu} &=&0, \qquad x\in
  \Gamma_N.
\end{subeqnarray} 
Here $\Omega \subset \mathbb{R}^2$ denotes a bounded simply-connected domain with a Lipschitz boundary $\Gamma
= \partial \Omega$ and the Dirichlet and Neumann boundary portions
$\Gamma_D$ and $\Gamma_N$ are disjoint subsets of $\Gamma$.  Throughout this paper a curve in $\mathbb{R}^2$ said to be ``smooth'' (resp. analytic) if it admits a $C^{\infty}$ (resp. analytic) invertible parametrization. Similarly, infinitely differentiable functions of real variable are called ``smooth'' functions.

Let the piecewise-smooth boundary $\Gamma$ be expressed in the form
\begin{eqnarray}
\Gamma = \bigcup_{q=1}^{Q_N+Q_D} \Gamma_q ,
\end{eqnarray}
where $Q_D$ and $Q_N$ denote  the numbers of smooth  Dirichlet and Neumann boundary portions, and where for $1\leq q \leq Q_D$ (resp. $Q_D+1\leq q \leq Q_D+Q_N)$) $\Gamma_q$ denotes a {\em smooth} Dirichlet (resp. Neumann) segment of the boundary curve $\Gamma$. Clearly, letting
\begin{equation*}
J_D = \{ 1,\dots,Q_D\} \quad \mbox{and} \quad J_N = \{ Q_D+1,\dots,Q_D+ Q_N\}
\end{equation*}
we have that 
\begin{equation*}
\Gamma_D= \bigcup_{q \in J_D} \Gamma_q \quad  \mbox{and}  \quad \Gamma_N= \bigcup_{q \in J_N} \Gamma_q
\end{equation*}
are the (piecewise smooth) portions of $\Gamma$ upon which Dirichlet and Neumann boundary conditions are enforced, respectively.
Note that in view of the assumption above both  Dirichlet-Neumann junctions and non-smooth points in $\Gamma$
necessarily occur at a common endpoint of two segments $\Gamma_{q_1}$, $\Gamma_{q_2}$ ($1\leq q_1 , q_2\leq Q_D+Q_N$). Note, additionally, that consecutive values of the index $q$ do not necessarily correspond to consecutive boundary segments (see e.g. Figure~\ref{fig:boundary_decomposition}).   
\begin{figure}[H]
\centering
  \includegraphics[width=2in]{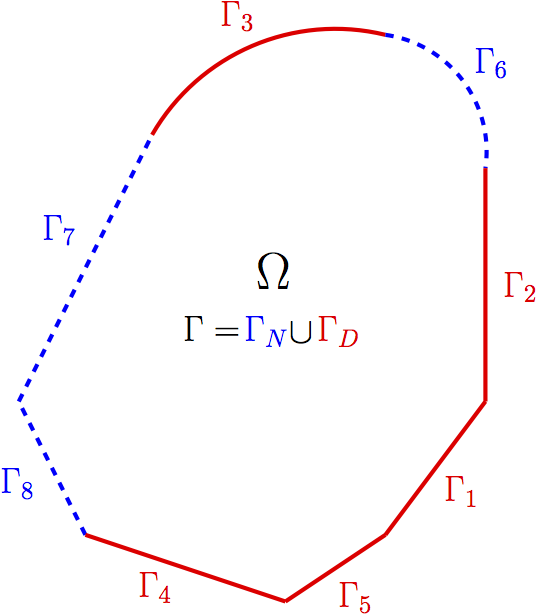}
  \caption{Boundary decomposition illustration. Dashed line: Neumann
    boundary. Solid line: Dirichlet boundary.}
  \label{fig:boundary_decomposition}
\end{figure}
\begin{remark}
\label{remark:smooth_boundary_portions}
Throughout this paper the decomposition of the curve $\Gamma$ is taken in such a way that no Dirichlet-Dirichlet or Neumann-Neumann junctions occur at a point at which the curve $\Gamma$ is smooth. In other words, every endpoint of $\Gamma_q$ is either a Dirichlet-Neumann junction or a non-smooth point of $\Gamma$. Clearly this is not a restriction:   two Dirichlet (resp. Neumann) segments $\Gamma_{q_1}$ and $\Gamma_{q_2}$ that meet at a  point at which $\Gamma$ is smooth can be combined into a single Dirichlet (resp. Neumann) segment.
\end{remark}

Following~\cite{CakoniColton2005}, for a given relatively open subset
$S\subseteq \Gamma$ we define the space
\begin{equation*}
H^{1/2}(S) = \{ \left. u \right| _{S} : u \in H^{1/2}(\Gamma) \},\qquad 
\widetilde{H}^{1/2}(S) = \{ \left. u \right| _{S} : \text{supp } u \subseteq S, u \in H^{1/2}(\Gamma) \}.
\end{equation*}
The dual of $ \widetilde{H}^{1/2}(S)$ is denoted by
\begin{equation*}
  H^{-1/2}(S) = \left(\widetilde{H}^{1/2}(S)\right)'.
\end{equation*}

\section{Integral formulation of the eigenvalue problem\label{sec:integral_formulation}} 
Introducing the Helmholtz Green function $G_\mu(x,y):= \frac{i}{4}
H^1_0(\mu|x-y|)$ and the associated single-layer potential
\begin{equation}\label{ansatz} 
  u(x):= \int_{\Gamma} G_\mu (x,y) \psi(y) \, ds_y \qquad (x \in \Omega)
\end{equation}
with surface density $\psi$, and relying on well known
expressions~\cite{COLTON:1983} for the values of the single layer $u$
and its normal derivative $\displaystyle\frac{\partial u}{\partial n}$
on $\Gamma$, we define the operators $\mathcal{A}^{(1)}:
H^{-1/2}(\Gamma) \to H^{1/2}(\Gamma_D)$ and $\mathcal{A}^{(2)}:
H^{-1/2}(\Gamma) \to H^{-1/2}(\Gamma_N)$ by
\begin{subeqnarray}
\label{modelB}
\slabel{modelB1}
\mathcal{A}^{(1)}_\mu[ \psi](x)  &=&  \displaystyle \int _{\Gamma}G_\mu(x,y)\psi (y)ds_y \qquad \mbox{for } x \in \Gamma_D, \\
\slabel{modelB2}
\mathcal{A}^{(2)}_\mu[ \psi](x)& =&  \displaystyle -\frac{\psi (x)}{2}+\int _{\Gamma}\frac{\partial}{\partial
    n_x}G_\mu(x,y)\psi (y)ds_y  \qquad \mbox{for } x \in \Gamma_N,
\end{subeqnarray}
and we then define 
\begin{equation}\label{modelC1}
\mathcal{A}_\mu =: H^{-1/2}(\Gamma)\to H^{1/2}(\Gamma_D)\times
H^{-1/2}(\Gamma_N)\qquad{\mbox by}\qquad \mathcal{A}_\mu [ \psi] =
(\mathcal{A}^{(1)}_\mu [ \psi] ,\mathcal{A}^{(2)}_\mu [ \psi] ) .
\end{equation}

The (linear) problem~\eqref{modelA} is
equivalent to the nonlinear problem of finding $\mu>0$ for which there holds:
\begin{equation}\label{modelC} 
\mbox{``The linear system $\mathcal{A}_\mu \psi  =0$ admits non-trivial solutions $\psi$''}. 
\end{equation}
To see this, let $u$ be given by equation~\eqref{ansatz}. Note that
$u$ does not vanish identically unless $\psi$ does---as can be
established by using uniqueness results for the Dirichlet exterior
problem and the jump relations satisfied by the single layer potential
and its normal derivative. Since, clearly, $- \Delta u = \mu^2 u$
throughout $\Omega$, further, it follows that for each $\mu$
satisfying~\eqref{modelC} the real number
\begin{equation*}
\lambda = \mu^2
\end{equation*}
is an eigenvalue of~\eqref{modelA}. Further, as established
in~\cite{AkhBruno2013} (cf. also~\cite{1997partial} for corresponding
results for the pure Dirichlet problem), every eigenvalue $\lambda$
equals $\mu^2$ for some $\mu\in\mathbb{R}$ satisfying~\eqref{modelC},
and the solutions $\psi$ of \eqref{modelC} are related to the
corresponding eigenfunctions $u$ of \eqref{modelA} via the
relation~\eqref{ansatz}. It follows that, as claimed, the eigenvalue
problem~\eqref{modelA} and problem~\eqref{modelC} are equivalent.
\begin{remark}\label{poles}
  As is known~\cite{1997partial}, complex values of $\mu$ do exist for
  which the integral form of the eigenvalue problem admits non-trivial
  solutions---although, they do not correspond to eigenvalues of the
  Laplace operator in the bounded domain $\Omega$. These values of
  $\mu$ do correspond to complex eigenvalues $\mu^2$ (also called
  ``scattering poles'') of the Laplace operator: they satisfy the
  Laplace eigenvalue equation {\em outside} $\Omega$ along with
  certain radiation conditions at infinity which allow for growth. The
  determination and study of these scattering poles, which is
  interesting in its own right~\cite{lenoir,moiseyev}, does not fall
  within the scope of this paper. A numerical method for evaluation of
  such poles for the Dirichlet exterior problem can be found
  in~\cite{steinbachunger}. In fact, we suggest that the stabilization
  strategy proposed in Section~\ref{Section:EVP} should be useful in
  the context of~\cite{steinbachunger} as well.
\end{remark}

Upon discretization of the problem \eqref{modelC}
(Section~\ref{Section:discretization_matrix}) we are lead to the
nonlinear problem of locating $\mu \in \mathbb{R}$ and ${\tt c} \in
\mathbb{R}^N$ which satisfy a discrete linear system of equations of
the form
\begin{equation}
\label{modelD} 
{\tt A}_{\mu} {\tt c} ={\tt 0}.
\end{equation} 
This problem is tackled in Section~\ref{Section:EVP} by consideration of the minimum singular value
$\widetilde{\eta}_n(\mu)$ (and corresponding right singular vector) of a certain augmented linear system related to~\eqref{modelD}: the quantities $\mu$ and ${\tt c}$ that satisfy~\eqref{modelC} are
obtained, simply, as a zero of the function $\sigma = \widetilde{\eta}_n(\mu)$ and the corresponding singular vector. The vector ${\tt c}$ provides a discrete approximation for the unknown density $\psi$; the eigenfunction $u$ itself
can then be obtained by means of a corresponding discrete version of the representation formula~\eqref{ansatz}.

\section{Singularities in eigenfunctions and integral equation   densities\label{Sectionbackground}} 

\begin{figure}[h!]
\centering
  \includegraphics[width=3in]{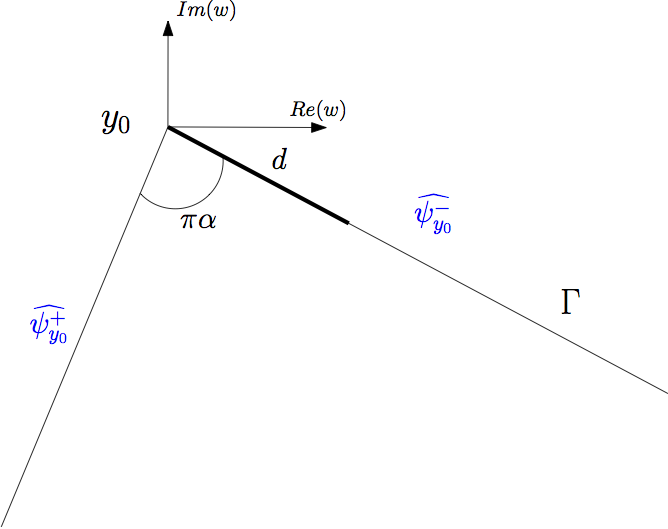}
  \caption{Point $y_0$ of singularity of the density function
    $\psi$. For $\alpha=1$ $y_0$ may or may not be a point at which
    $\Gamma$ is smooth (infinitely differentiable);
    cf. Remark~\ref{remark:smooth_boundary_portions}.}
  \label{fig:singularity}
\end{figure}
This section collects known results about the smoothness properties and singularities of the eigenfunctions in equation~\eqref{modelA} and the corresponding integral densities in equation~\eqref{modelB}. The singular character of these functions is incorporated as part of the discretization strategies we introduce in Section~\ref{Section:Method}.

Let $y_0 = (y^0_1,y^0_2)\in\Gamma$ be either a corner point (with associated corner angle $\alpha\pi $) at which a Dirichlet-Neumann junction may or may not occur, or a point around which the curve $\Gamma$ is smooth ($\alpha =1$) and which separates Dirichlet and Neumann regions within $\Gamma$. In either case $y_0$ is a singular point for the problem. Following the notations in~\cite{wigley1970mixed,AkhBruno2013}, in order to express the singular character of the eigenfunctions $u(y)$ ($y = (y_1,y_2) \in \Omega$) and corresponding integral equation densities $\psi(y)$ ($y = (y_1,y_2) \in \Gamma$) around $y_0$  we use certain functions $\widehat{u}_{y_0} = \widehat{u}_{y_0}(w)$, $\widehat{\psi}_{y_0}^+ = \widehat{\psi}_{y_0}^+(d)$ and $\widehat{\psi}_{y_0}^- = \widehat{\psi}_{y_0}^-(d)$. Here the left (resp. right) function $\widehat{\psi}_{y_0}^-$ (resp $\widehat{\psi}_{y_0}^+$) is the density as a function of the distance $d$ to the point $y_0$ in a small one-sided neighborhood immediately before (resp. immediately after) the point $y_0$ as the curve is traversed in the counterclockwise direction, and $w =(y_1-y^0_1) + i (y_2-y^0_2)$ is a complex variable (see Figure~\ref{fig:singularity}). The functions $\widehat{u}_{y_0}$, $\widehat{\psi}_{y_0}^+$ and $\widehat{\psi}_{y_0}^-$ are given by
\begin{equation}
\label{eq:real_to_complex}
\begin{split}
  &\widehat{u}_{y_0}(w) = u(y),  \\
  &\psi(y) =\widehat{\psi}_{y_0}^+(d(y)) \quad y \in \Gamma_{q1},\\
  &\psi(y) =\widehat{\psi}_{y_0}^- (d(y)) \quad y \in \Gamma_{q2}, 
\end{split}
\end{equation}
where, as mentioned above
\begin{equation}\label{local}
w = (y_1-y^0_1) + i (y_2-y^0_2) \quad ;\quad d(y) = \sqrt{(y_1-y^0_1)^2+(y_2-y^0_2)^2}.
\end{equation}

It is known~\cite{wigley1964,wigley1970mixed} that, under our
assumption that the curve $\Gamma$ is piecewise smooth, for any given
integer $\mathcal{N}$ and any given positive number $\varepsilon$ the
eigenfunctions in equation~\eqref{modelA} can be expressed in the form
\begin{equation}
\label{eigenfunction_asymptotics}
  \widehat{u}_{y_0}= \log(w) P_{y_0}^1 + \log(\bar{w})P_{y_0}^2 + P_{y_0}^3 + \mathcal{O}(w^{\mathcal{N}-\varepsilon})
\end{equation} 
for all $w$ in a neighborhood of the point, where
$P_{y_0}^1,P_{y_0}^2$ and $P_{y_0}^3$ are polynomials in $w$,
$\bar{w}$, $w^{1/(2 \alpha)}$, $\bar{w}^{1/(2 \alpha)}$ if $\alpha$ is
irrational; $P_{y_0}^1,P_{y_0}^2$ and $P_{y_0}^3$ are polynomials in
$w$, $\bar{w}$, $w^{1/(2 \alpha)}$, $\bar{w}^{1/(2 \alpha)}$,
$w^q\log(w)$, $\bar{w}^q \log(\bar{w})$ if $\alpha=p/q$ for some
integer $(p,q)=1$ and $q$ is odd, and $P_{y_0}^1,P_{y_0}^2$ and
$P_{y_0}^3$ are polynomials in $w$,$\bar{w}$,$w^{1/(2
  \alpha)}$,$\bar{w}^{1/(2 \alpha)}$,$w^{q/2} \log(w)$, $\bar{w}^{q/2}
\log(\bar{w})$ if $\alpha=p/q$ for some integer $(p,q)=1$ and $q$ is
even. (In fact, for Dirichlet-Dirichlet and Neumann-Neumann corners
some of the coefficients in the asymptotic expressions above vanish
and weaker singularities---polynomials in powers of $1/\alpha$ instead
of $1/(2 \alpha)$ in equation~\eqref{eigenfunction_asymptotics}---thus
result; see~\cite{wigley1964} for details. This point is not of any
practical significance in the context of this paper, however.) We
point out that in case the curve $\Gamma$ is smooth around $y_0$
($\alpha = 1$) and, thus, in view of
Remark~\ref{remark:smooth_boundary_portions} a Dirichlet-Neumann
junction exists at a smooth point $y_0$, the logarithmic terms
mentioned above actually drop out. Indeed, as shown in~\cite[Theorem
2.7]{AkhBruno2013}, in this case for any given $\mathcal{N}$ and
$\varepsilon >0$ the singularities of the eigenfunctions in
equations~\eqref{modelA} are characterized by expression of the form
\begin{equation}
\widehat{u}_{y_0}= P_{y_0} + \mathcal{O}(w^{\mathcal{N}-\varepsilon}) \label{asymptoticsSolution},
\end{equation}
where $P_{y_0}$ is a polynomial in $w$, $\bar{w}$, $w^{1/2}$  and $\bar{w}^{1/2}$.

The singular character of the density $\psi$ plays a fundamental role
in our proposed numerical strategy for discretization of the system of
integral equations~\eqref{modelB}.  To determine the singularities of the function $\psi$ we let $u_e$ denote the solution of an auxiliary Dirichlet
problem outside $\Omega$ with Dirichlet boundary values given by the
boundary values of the eigenfunction $u$:
\begin{equation*}
\begin{split}
  \Delta  u_e+\mu^2 u_e&=0 \text{ in } \Omega^c, \\
  u_e|_{\Gamma}&=u|_{\Gamma}. 
\end{split}
\end{equation*} 
In \cite[Proposition 1.1]{AkhBruno2013} it is shown that any given
eigenfunction $u$ satisfying~\eqref{modelA} can be expressed as a
single layer potential with a uniquely determined density $\psi$ given
by
\begin{equation}
\label{eq:phi_asymptotics}
\displaystyle \psi = \left .\frac{\partial u_e}{\partial n}\right\vert_{\Gamma} - \left .\frac{\partial u}{\partial n}\right\vert_{\Gamma}.
\end{equation}
In view of the regularity
result~\cite[Theorem 2.8]{AkhBruno2013} it follows that around  the point $(y^0_1,y^0_2)$ the functions $\widehat{\psi}_{y_0}^+$ and $\widehat{\psi}_{y_0}^-$ of equation~\eqref{eq:real_to_complex} are given in terms of the distance function~\eqref{local} by
\begin{equation}
\label{asymptoticsDensitycorner}
\begin{split}
\widehat{\psi}_{y_0}^+(d)=d^{1/(2 \alpha)-1} Q_{y_0}^1(d,d^{1/(2 \alpha)}, \log(d)) + \mathcal{O}(d^{\mathcal{N}-1-\varepsilon}),  \\
\widehat{\psi}_{y_0}^-(d)=d^{1/(2 \alpha)-1} Q_{y_0}^2(d,d^{1/(2\alpha)}, \log(d)) + \mathcal{O}(d^{\mathcal{N}-1-\varepsilon}) 
\end{split}
\end{equation}
for all $\mathcal{N}\in\mathbb{N}$. If $\Gamma$ is smooth at
$(y^0_1,y^0_2)$, in turn, the asymptotics of $\widehat{\psi}_{y_0}^+$
and $\widehat{\psi}_{y_0}^-$ around this point are given by
\begin{equation}
\label{asymptoticsDensity}
\begin{split}
\widehat{\psi}_{y_0}^+(d)=d^{-1/2} Q_{y_0}^3(d,d^{1/2}) + \mathcal{O}(d^{\mathcal{N}-1-\varepsilon}),  \\
\widehat{\psi}_{y_0}^-(d)=d^{-1/2} Q_{y_0}^4(d,d^{1/2}) + \mathcal{O}(d^{\mathcal{N}-1-\varepsilon}) 
\end{split}
\end{equation}
for any $\mathcal{N}\in\mathbb{N}$. Here $Q_{y_0}^i$, $i=1,4$ are
polynomials in the listed arguments. (Note that, while correct, the
boundary expressions~\eqref{asymptoticsDensitycorner} are less
detailed than the corresponding volumetric
expression~\eqref{eigenfunction_asymptotics}. In our context the
additional detail provided by
equation~\eqref{eigenfunction_asymptotics}, which shows that the
logarithmic terms are always accompanied by a $w^r$ factor for an
integer $r$ equal to either $q$ or $q/2$, do not carry any particular
significance.)

\section{High-order quadratures for integral eigensolvers \label{Section:Method}}
This section introduces high-order quadrature rules for the integral
operators in equation~\eqref{modelB}.  For domains with corners a
numerical integration method based on polynomial changes of variables
presented in Section~\ref{sec:corner_geometries}
(cf~\cite{sag1964numerical,KUSSMAUL:1969,MARTENSEN:1963,KRESS:1990,COLTON:1998})
ensures high-order integration in spite of the corner singularities.
For smooth boundaries $\Gamma$, further, a certain ``Fourier
Continuation'' algorithm is used to take advantage of the smoothness
of the domain boundary and thus yield even higher accuracies for the
singular Dirichlet-Neumann densities, eigenvalues and
eigenfunctions. The latter technique is described in
Section~\ref{sec:smooth_geometries}. Both of these descriptions rely
on expressions presented in Section~\ref{bound_dec} for the various
operators under consideration in terms of explicit parametrizations of
the boundary curve $\Gamma$.

\subsection{Parametrized  Operators\label{bound_dec}}

In view of the notations in Section~\ref{Introduction}, the operators~\eqref{modelB1},~\eqref{modelB2} applied to a density $\psi$ and evaluated at a given point $x
\in \Gamma$ can be expressed  in the form
\begin{equation} \label{eq:integrals_decomposed} 
\begin{split}
&\mathcal{A}^{(1)}_\mu[ \psi](x)  = \sum_{q=1}^{Q_D+Q_N} \int _{\Gamma_q} G_\mu(x,y)\psi (y)ds_y \qquad \mbox{for } x \in \Gamma_D,     \\
&\mathcal{A}^{(2)}_\mu[ \psi](x) =\displaystyle -\frac{\psi (x)}{2}+ \sum_{q=1}^{Q_D+Q_N} \int _{\Gamma_q} \frac{\partial}{\partial n_x} G_\mu(x,y)\psi (y)ds_y  \qquad \mbox{for } x \in \Gamma_N.  
\end{split}
\end{equation}
We seek expressions of these operators in terms of parametrizations of
the underlying integration curves. Without loss of generality, we
assume the boundary curve $\Gamma$ is parametrized by a single
piecewise-smooth vector function $y=z(\tau)=(z_1(\tau),z_2(\tau))$
($a\leq \tau < b$) satisfying $ (z_1')^2+ (z_2')^2> \delta$ for some
scalar $\delta >0$ at each point where $z$ is differentiable; the
 parametrization we use for integration on each one of the (smooth) Dirichlet
and Neumann segments $\Gamma_q$ is then taken to equal the relevant
restriction of the function $z$ to a certain interval $[a_q,b_q]$,
$a_q \leq b_q$. Clearly, $[a,b]=\displaystyle \cup_{q=1}^{Q_D+Q_N}
[a_q, b_q]$ and $ [a_{q_1}, b_{q_1}]\cap [a_{q_2}, b_{q_2}]$ is either
the empty set or a set containing a single point.

To evaluate each one of the integrals
in~\eqref{eq:integrals_decomposed} for a point $x\in \Gamma$ we rely
on the decomposition
\begin{equation*}
H^{(1)}_\nu(\zeta) = F_{\nu}^{(0)}(\zeta) \log(\zeta) + F_{\nu}^{(1)}(\zeta) ,
\end{equation*}
where $F_{\nu}^{(0)}$ and $F_{\nu}^{(1)}$ are analytic functions
(cf.~\cite[p. 68]{colton1984}). For each $q_2 \in J_D \cup J_N$ two
integrals over $\Gamma_{q_2}$ appear in
equation~\eqref{eq:integrals_decomposed}. Using the substitutions
$x=z(t)$ and $y=z(\tau)$ and assuming $x=z(t)\in\Gamma_{q_1}$ for a
certain $q_1\in J_D \cup J_N$ ($t\in [a_{q_1},b_{q_1}]$), we express
each one of the aforementioned integrals over $\Gamma_{q_2}$ in terms
of the operator
\begin{equation}
\mathcal{\widetilde{I}}_{q_1,q_2}[\widetilde{\varphi}](t) =   \int_{a_{q_2}}^{b_{q_2}}\left\{\widetilde{K^1}(t,\tau) \log R^2(t,\tau) +\widetilde{K^2}(t,\tau) \right\} \widetilde{\varphi}(\tau)\de \tau, \label{eq:parametrized_operators}
\end{equation}
where
\begin{equation}
\label{eq:psi_to_phi}
R(t,\tau) := |x-y| =|z(t)-z(\tau)|\quad, \quad \widetilde{\varphi}(\tau) =\psi (z(\tau)).
\end{equation}
Here the kernels $\widetilde{K^1}(t,\tau)$ and $\widetilde{K^2}(t, \tau)$ denote functions that depend on the evaluated operator: for the integrals included in the operator $\mathcal{A}^{(1)}_\mu[ \psi]$ these kernels are given by the products of the arc-length $\sqrt{(z'(t))^2}$ and the factors $F_{\nu}^{(0)}$ and $F_{\nu}^{(1)}$  for $\zeta = kR(t,\tau)$ and $\nu= 0$. For the integrals included in the operator $\mathcal{A}^{(2)}_\mu[ \psi]$, on the other hand, an additional smooth factor is included, and $\nu=1$ is taken; see~\cite[p. 68]{colton1984} for details. In particular, for each $t\in [a,b]$, $\widetilde{K^1}(t,\tau)$ and $\widetilde{K^2}(t, \tau)$ are smooth (resp. analytic) functions of $\tau$ for all $\tau\in[a_{q_2}, b_{q_2}]$ provided $y(\tau)$ is itself smooth (resp. analytic). (The notations $\widetilde{K^1}$, $\widetilde{K^2}$ and $ \widetilde{\varphi}$ are used in connection with the basic  parametrization $z$; corresponding kernels $K^1$, $K^2$ and density $\varphi$, which include additional ``smoothing'' reparametrizations, are utilized in Sections~\ref{sec:smooth_geometries} and~\ref{sec:corner_geometries} below.) 

\begin{remark}\label{remark:kernel_singularitites}
Clearly the kernel  in the integral
operator~\eqref{eq:parametrized_operators} (the quantity in curly brackets in this equation) is singular, smooth, or nearly singular depending, respectively, on whether 1.~$q_1=q_2 = q$ (that is, $t, \tau \in [a_{q}, b_{q}]$); 2.~$q_1\not=q_2$ and $t$ is ``far'' from
$[a_{q_2}, b_{q_2}]$, or 3.~$q_1\not=q_2$ and $t$ is ``close'' to
$[a_{q_2}, b_{q_2}]$. The significance of the terms ``far'' and ``close'' and corresponding selections of algorithmic thresholds is taken up in Remark~\ref{remark:cases}.
\end{remark}
\begin{remark}\label{remark:cases}
In the case $q_1 \not = q_2$ point $t$ is considered to be ``far'' from the interval $[a_{q_2},b_{q_2}]$ (case 2. in Remark~\ref{remark:kernel_singularitites})  provided
\begin{equation}\label{far_close_condition}
\min(|z_{q_1}(t) -z_{q_2}(a_{q_2})|,|z_{q_1}(t) -z_{q_2}(b_{q_2})|) > h_1 ,
\end{equation}
that is, provided the minimum euclidean distance between $z_{q_1}(t)$
and the interval endpoints larger than $h_1$, where $h_1$ is a given
(user-provided) parameter which is to be selected so as to maximize
overall accuracy.  Otherwise the point $t$ is considered to be
``close'' to the interval $[a_{q_2},b_{q_2}]$ (case 3. in
Remark~\ref{remark:kernel_singularitites}).
\end{remark}

\begin{remark}\label{remark:density_singularitites}
  In the case of a Lipschitz domain
  equations~\eqref{asymptoticsDensitycorner} imply that the asymptotic
  behavior of the integral density $\widetilde{\varphi}=
  \widetilde{\varphi}(\tau)$ near $\tau=a_{q_2}$ and near $\tau =
  b_{q_2}$ is characterized, respectively, by expressions of the form
\begin{equation}
\label{eq:phi_asymptotics_tau}
\begin{split}
\widetilde{\varphi}(\tau) &= (\tau - a_{q_2})^{1/(2\alpha)-1} P_1((\tau - a_{q_2})^{1/(2\alpha)}, (\tau - a_{q_2}), \log{(\tau - a_{q_2})}) + \mathcal{O}((\tau - a_{q_2})^{\mathcal{N}-1-\varepsilon}),\\
\widetilde{\varphi}(\tau) &= (\tau - b_{q_2})^{1/(2\alpha)-1} P_2((\tau - b_{q_2})^{1/(2\alpha)}, (\tau - b_{q_2}), \log{(\tau - b_{q_2})}) + \mathcal{O}((\tau - b_{q_2})^{\mathcal{N}-1-\varepsilon}),
\end{split}
\end{equation}
where,  for any given integer $\mathcal{N}$, $P_1$ and $P_2$ are polynomials---which, of course, depend on $q_2$ and $\mathcal{N}$.  In the case of smooth $\Gamma$, in turn, the equations in~\eqref{asymptoticsDensity} tell us that the the asymptotic behavior of $\widetilde{\varphi}$ near $\tau=a_{q_2}$ and near $\tau = b_{q_2}$ is given, respectively,  by the relations
\begin{equation}\label{eq:phi_asymptotics_tau_smooth}
\begin{split}
\widetilde{\varphi}(\tau) &= (\tau - a_{q_2})^{-1/2} P_3((\tau - a_{q_2})^{1/2}) + \mathcal{O}((\tau - a_{q_2})^{\mathcal{N}-1-\varepsilon}),\\
\widetilde{\varphi}(\tau) &= (\tau - b_{q_2})^{-1/2} P_4((\tau - b_{q_2})^{1/2}) + \mathcal{O}((\tau - b_{q_2})^{\mathcal{N}-1-\varepsilon}),
\end{split}
\end{equation}
where, once again, for any given integer $\mathcal{N}$, $P_3$ and $P_4$ are polynomials that depend on $q_2$ and $\mathcal{N}$. 
\end{remark}

We now turn to the design of high-order accurate quadrature rules for
integrals of the type~\eqref{eq:parametrized_operators} which, by
necessity, must take into account the singular character of the
integrand---including the explicit logarithmic singularities and near
singularities mentioned above as well as the singularities that the
(unknown) density function~$ \widetilde{\varphi}$ possesses at
Dirichlet-Neumann junctions and corner points
(Remarks~\ref{remark:kernel_singularitites}
and~\ref{remark:density_singularitites}). To do this we consider
separately the cases in which the overall curve $\Gamma$ is smooth
(Section~\ref{sec:smooth_geometries}) and Lipschitz
(Section~\ref{sec:corner_geometries}).

\subsection{FC-based algorithm for evaluation of the integral
  operators~\eqref{eq:parametrized_operators} \label{sec:smooth_geometries}}

This section concerns the mixed Dirichlet-Neumann boundary value
problem on a  domain $\Omega$ with a smooth boundary $\Gamma$; throughout this section we therefore consider integrals
of the form~\eqref{eq:parametrized_operators} where $y=z(\tau)$ is a
\emph{smooth} function on the entire interval $[a,b]$. As pointed out in Remark~\ref{remark:density_singularitites},  the global smoothness of $\Gamma$ ensures that the singularities of  the
unknown density function $\widetilde{\varphi}= \widetilde{\varphi}(\tau)$ are characterized by the expression~\eqref{eq:phi_asymptotics_tau_smooth} rather than~\eqref{eq:phi_asymptotics_tau}.

\subsubsection{FC-based algorithm: Cosine transformation and density regularization \label{sec:cos_transformation}}

As mentioned in Section~\ref{bound_dec}, all singularities must  be taken into account in order to obtain an overall high-order accurate solver. In what follows we describe an
approach that simultaneously eliminates the density singularities and accounts for both the logarithmically singular kernel $\widetilde{K^1}\cdot \log R^2$  and smooth kernel  $\widetilde{K^2}$ in~\eqref{eq:parametrized_operators} and  thereby results in a 
high-order accurate  method for evaluation of this integral operator. To do this we proceed by
introducing a cosine transformation for the integral in a segment $\Gamma_{q_2}$---after a necessary scaling to the
interval $[-1,1]$.

In detail we first map each parameter interval $[a_{q_2},b_{q_2}]$ to the interval $[-1,1]$ by means of the linear transformations
\begin{equation}
\label{linear_transformation_smooth_case}
\displaystyle \tau = \xi_{q_2}(\rho):=\frac{(b_{q_2}-a_{q_2}) \rho+ (a_{q_2}+b_{q_2})}{2}.
\end{equation}
Clearly, values of $t$ within $[a_{q_1},b_{q_1}]$ are given by $t = \xi_{q_1}(r)$ for some $r\in [-1,1]$. Denote
\begin{equation*}
\widetilde{K}_{q_1,q_2} (r,  \rho) =  \widetilde{K^1}(\xi_{q_1}(r),\xi_{q_2}(\rho)) \log R^2(\xi_{q_1}(r),\xi_{q_2}(\rho)) + \widetilde{K^2}(\xi_{q_1}(r),\xi_{q_2}(\rho)).
\end{equation*}
After application of this transformation, the integral~\eqref{eq:parametrized_operators} becomes
\begin{equation}
\label{eq:integral_after_linear_cov}
\begin{split}
\mathcal{\widetilde{I}}_{q_1,q_2}[\widetilde{\varphi}](r) =\frac{b_{q_2}-a_{q_2}}{2} &\int_{-1}^1 \widetilde{K}_{q_1,q_2} (r,  \rho) \widetilde{\varphi} (\xi_{q_2}(\rho))  \de\rho.
\end{split}
\end{equation}
Introducing the sinusoidal change of variables 
\begin{equation}\label{sinusoidal_cov}
r =  \cos(s) \qquad \mbox{and}\qquad  \rho =  \cos(\sigma), 
\end{equation}
and letting
\begin{equation}
\label{eq:phi_tilde}
\varphi_{q_2}(\sigma) = \widetilde{\varphi} (\xi_{q_2}( \cos(\sigma)))
\end{equation}
and 
\begin{equation}
\label{eq:smooth_case_kernel_transformation_s_sigma_one_kernel}
K_{q_1,q_2} (s,\sigma) =  \widetilde{K}_{q_1,q_2} (\cos(s),\cos(\sigma))
\end{equation}
expression~\eqref{eq:integral_after_linear_cov} becomes
\begin{equation}\label{eq:before_fc}
\mathcal{I}_{q_1,q_2}[\varphi_{q_2}](s) =\frac{b_{q_2}-a_{q_2}}{2}   \int_0^{\pi} K_{q_1,q_2} (s, \sigma) \varphi_{q_2}(\sigma) \sin(\sigma) d\sigma.
\end{equation}
\begin{lemma}
\label{smooth_in_theta}
Let $\Gamma$ be a smooth curve. Then the product $\varphi_{q_2}(\sigma)\sin (\sigma)$ is a smooth function of $\sigma$ for $\sigma \in [0,\pi]$. 
\end{lemma}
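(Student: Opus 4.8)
The plan is to reduce the whole statement to the behaviour of $\widetilde\varphi(\tau)=\psi(z(\tau))$ near the two endpoints $\tau=a_{q_2}$ and $\tau=b_{q_2}$ of the parameter interval, which under $\tau=\xi_{q_2}(\cos\sigma)$ correspond to $\sigma=\pi$ and $\sigma=0$ respectively. First I would dispose of the interior: on the \emph{open} interval $(a_{q_2},b_{q_2})$ the curve is smooth and, by the decomposition convention of Remark~\ref{remark:smooth_boundary_portions}, contains no Dirichlet--Neumann junction and no non-smooth point, so standard elliptic regularity applied to $u$ (with locally homogeneous Dirichlet or Neumann data) and to the exterior solution $u_e$, together with the representation $\psi=\partial_n u_e-\partial_n u$ of~\eqref{eq:phi_asymptotics}, shows that $\widetilde\varphi\in C^\infty(a_{q_2},b_{q_2})$. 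Since $\xi_{q_2}$ and $\cos$ are smooth and $\cos\sigma\in(-1,1)$ for $\sigma\in(0,\pi)$, the product $\varphi_{q_2}(\sigma)\sin\sigma$ is automatically $C^\infty$ on $(0,\pi)$, and it remains only to check smoothness at $\sigma=0$ and $\sigma=\pi$.

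At $\sigma=0$ I would use the half-angle identity to write $d:=b_{q_2}-\xi_{q_2}(\cos\sigma)=\tfrac12(b_{q_2}-a_{q_2})(1-\cos\sigma)=(b_{q_2}-a_{q_2})\sin^2(\sigma/2)$, so that both $d$ and $d^{1/2}=\sqrt{b_{q_2}-a_{q_2}}\,\sin(\sigma/2)$ (the nonnegative square root, since $\sin(\sigma/2)\ge 0$ on $[0,\pi]$) are smooth functions of $\sigma$ near $\sigma=0$. Inserting the smooth-boundary density asymptotics~\eqref{eq:phi_asymptotics_tau_smooth} (equivalently~\eqref{asymptoticsDensity}), which for any prescribed $\mathcal N$ read $\widetilde\varphi(\tau)=d^{-1/2}P(d^{1/2})+\mathcal O(d^{\,\mathcal N-1-\varepsilon})$ for a polynomial $P$ (depending on $\mathcal N$ and on the endpoint), and using $\sin\sigma=2\sin(\sigma/2)\cos(\sigma/2)$ together with $\sin(\sigma/2)=d^{1/2}/\sqrt{b_{q_2}-a_{q_2}}$, the singular factor cancels exactly:
\begin{equation*}
\varphi_{q_2}(\sigma)\sin\sigma=\frac{2}{\sqrt{b_{q_2}-a_{q_2}}}\,\cos(\sigma/2)\,P(d^{1/2})+\mathcal O\!\left(d^{\,\mathcal N-1/2-\varepsilon}\right).
\end{equation*}
The first term is a product/composition of the smooth functions $\cos(\sigma/2)$ and the polynomial $P$ evaluated at the smooth function $d^{1/2}$, hence $C^\infty$ near $\sigma=0$. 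The point $\sigma=\pi$ is handled identically, now with $d:=\xi_{q_2}(\cos\sigma)-a_{q_2}=(b_{q_2}-a_{q_2})\cos^2(\sigma/2)$, so that $d^{1/2}=\sqrt{b_{q_2}-a_{q_2}}\,\cos(\sigma/2)$ and the $\cos(\sigma/2)$ factor in $\sin\sigma$ again absorbs the $d^{-1/2}$ singularity.

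The one genuinely delicate step — and the one I would be most careful about — is the remainder $\mathcal O(d^{\,\mathcal N-1/2-\varepsilon})$: the vanishing of a function and finitely many of its derivatives at a point does not by itself give $C^\infty$. I would handle this by invoking that the Wigley-type expansions~\eqref{asymptoticsDensity}/\eqref{eq:phi_asymptotics_tau_smooth} hold in the strong, term-by-term differentiable sense (the $k$-th $\tau$-derivative of the remainder being $\mathcal O(d^{\,\mathcal N-1-k-\varepsilon})$); then, since $\sigma\mapsto\tau(\sigma)$ is smooth with $d\tau/d\sigma=\mathcal O(d^{1/2})$ and all higher derivatives bounded, the chain and Leibniz rules show that for each fixed $k$ the remainder term and all its $\sigma$-derivatives up to order $k$ tend to $0$ at the endpoint provided $\mathcal N$ is taken large enough; this forces $C^k$ for every $k$, hence $C^\infty$. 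Combining the interior estimate with the two endpoint estimates proves the lemma.
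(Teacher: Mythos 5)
Your proof follows essentially the same route as the paper's: both invoke the smooth-boundary density asymptotics~\eqref{asymptoticsDensity}/\eqref{eq:phi_asymptotics_tau_smooth} and observe that, under the cosine change of variables, the factor $\sin\sigma$ absorbs the $d^{-1/2}$ endpoint singularity so that each term becomes a smooth function of $\sigma$. You tighten two points the paper leaves terse---you use exact half-angle identities ($1\pm\cos\sigma=2\cos^2(\sigma/2)$, $2\sin^2(\sigma/2)$) where the paper states only asymptotic $\sim$ equivalences, and you correctly flag and handle the remainder-term subtlety (vanishing of finitely many derivatives does not by itself give $C^\infty$) by appealing to term-by-term differentiability of the Wigley-type expansion.
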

\begin{proof}
  In view of Remark~\ref{remark:smooth_boundary_portions} and the
  global smoothness of $\Gamma$ it follows that Dirichlet-Neumann
  junctions occur at both endpoints of $\Gamma_{q_2}$ and, thus, the
  asymptotic behavior of the density function $\widetilde{\phi}(\tau)$
  takes the form~\eqref{eq:phi_asymptotics_tau_smooth}. But, clearly,
  for any integer $\ell \geq -1$ for $\tau$ near $a_{q_2}$ (which
  corresponds to $\rho$ near $-1$ and $\sigma$ near $\pi$), up to
  multiplicative constants we have
\begin{equation*}
(\tau - a_{q_2})^{\ell/2}\sin(\sigma) \sim (\rho + 1)^{\ell/2}\sin(\sigma) \sim \sin(\sigma)^{\ell+1}.
\end{equation*}
Similarly  for $\tau$ near $b_{q_2}$ (which corresponds to   $\rho$ near $1$, to $\sigma$ near zero), once again up to multiplicative constants there holds
\begin{equation*}
(\tau - b_{q_2})^{\ell/2}\sin(\sigma) \sim (\rho - 1)^{\ell/2}\sin(\sigma) \sim \sin(\sigma)^{\ell+1}.
\end{equation*}
It then follows from equation~\eqref{asymptoticsDensity} that $\varphi_{q_2}(\sigma)\sin (\sigma)$ is a smooth function of $\sigma$ and the proof is complete.
\end{proof}

\subsubsection{FC-based algorithm: Fourier Continuation\label{subsection_FC}} 
We seek to produce high order quadrature rules for evaluation of the integral operator $\mathcal{I}_{q_1,q_2}[\varphi](r)$ in equation~\eqref{eq:before_fc} by exploiting existing explicit formulae for evaluation of integrals of the form
\begin{equation}
\label{log_ints}
\int_0^{\pi} \log|r  - \cos(\sigma)| \cos(n\sigma) d\sigma \quad \mbox{and}\quad
\int_0^{\pi} \log|r  - \cos(\sigma)| \sin(n\sigma) d\sigma
\end{equation}
for all real values of $r$ 
\begin{remark} \label{remark:log_integrals} Explicit expressions for
  the integrals~\eqref{log_ints} in the case of cosine integrands and
  $|r| \leq 1$ can be found
  in~\cite{bruno2007regularity,yan1988integral,masonchebyshev}. Corresponding
  expressions for the sine integrands and for the case $|r|>1$, which
  were introduced in~\cite{AkhBrunoReitich}, in turn, are reproduced
  in equations~\eqref{symms_operator}-\eqref{eq:Am_formula}
  below. Note that values $\left|r \right| \leq 1$ give rise to weakly
  singular logarithmic integration, while values $\left| r \right| >
  1$ result in smooth integrands which, however, are nearly singular
  for values of $r$ close to $1$ and $-1$.
\end{remark}
In order to take advantage of the expressions~\eqref{log_ints} we need
to express the integrand in equation~\eqref{eq:before_fc} in terms of
the functions $\cos(n \sigma)$ and $\sin(n \sigma)$; we do this by
relying on a certain Fourier Continuation
method~\cite{bruno2009_1,bruno2009_2,albin2011} which we discuss in
what follows.

To demonstrate the Fourier Continuation procedure as it applies in the
present context we consider the function $\widetilde{f}(\rho) =
\arccos(\rho)$ whose asymptotic expansions around $\rho = 1$ and $\rho
= -1$, just like those for the function
$\widetilde{\varphi}(\xi_{q_2}(\rho))$, contain the singular powers
$(\rho-1)^{n/2}$ and $(\rho+1)^{n/2}$, respectively, for all positive
odd values of the integer $n$. (Note in passing that the function
$\widetilde{\varphi}(\xi_{q_2}(\rho))$ contains, additionally, the
smooth terms $(\rho-1)^{n/2}$ and $(\rho+1)^{n/2}$ that result for
{\em even} values of $n$; this, however, is of no significance in the
present example.) The left portion of
Figure~\ref{fig:FC_demonstration} displays the function
$\widetilde{f}$ on the interval $[-1,1]$.  Under the cosine change of
variables used earlier in this section in the definition of the
function $\widetilde{\varphi}(\xi_{q_2}(\rho))$, this function becomes
$f(\sigma)=\widetilde{f}(\cos(\sigma)) = \sigma$ on the interval
$[0,\pi]$. The expansion sought above for the function
$\widetilde{f}$, would, in this simplified example, require
representation of the function $f(\sigma) = \sigma$ in a rapidly
convergent series in $\cos(n \sigma)$ and $\sin(n \sigma)$. This
objective could be achieved by means of an adequate globally smooth
and $2\pi$-periodic continuation of the function $f$. Although
theoretically this does not present difficulties, a fast and stable
numerical algorithm for evaluation of such a Fourier series has been
provided only recently---this is the Fourier Continuation (FC) method
mentioned above~\cite{bruno2009_1,bruno2009_2,albin2011}. A brief
overview in these regards is presented in appendix~\ref{sec:FC}. The
result of an application of the FC approach to the function
$f(\sigma)$ discussed above is given in
Figure~\ref{fig:FC_demonstration}: the desired globally smooth
periodic function, which is given as a rapidly convergent Fourier
expansion in terms of the functions $\cos(n \sigma)$ and $\sin(n
\sigma)$, is depicted on the right portion of this figure.
\begin{figure}[h!]
\centering
  \includegraphics[width=6in]{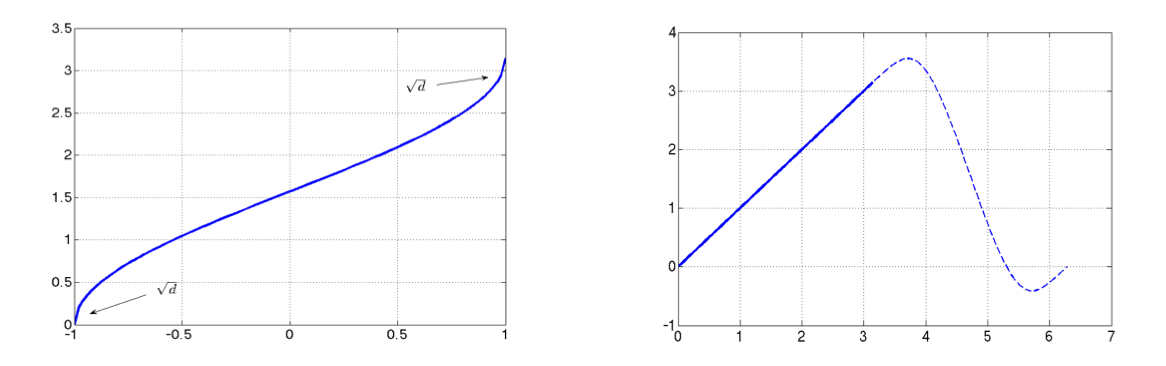}
  \caption{A cosine change of variables on the (singular) curve
    displayed in the left image produces the $y=x$ curve between $0$
    and $\pi$ in the right image. An application of the Fourier
    Continuation method then gives rise to the dashed-line
    continuation to a fully $2\pi$-periodic globally-smooth function
    shown on the right image.}
  \label{fig:FC_demonstration}
\end{figure}

\subsubsection{FC-based algorithm: Canonical kernel decomposition\label{ker_dec}}
This section provides canonical decompositions for the integral
kernels in equation~\eqref{eq:before_fc} in terms of smooth factors
and factors that explicitly display logarithmic singularities and
near-singularities. We consider three cases that parallel those in
Remark~\ref{remark:kernel_singularitites}; in each case the
decomposition depends on the singular character of the kernel
$K_{q_1,q_2}(s,\sigma)$:
\begin{enumerate}[1)]
\item{\bf Case $q_1\not=q_2$ and $t$ is ``far'' from $[a_{q_2},
    b_{q_2}]$ \label{point1}}

  The kernel $K_{q_1,q_2}(s,\sigma)$ is a smooth function of $\sigma$
  in this case (Remark~\ref{remark:kernel_singularitites}).

\item{\bf Case $q_1=q_2 = q$ \label{point2}} 

Introducing the kernels
\begin{equation}
\label{eq:smooth_case_kernel_transformation_same_q}
\begin{split}
  &K^1_{q,q}(s,\sigma) =  2 \widetilde{K^1}(\xi_{q}(  \cos(s)),\xi_{q}(\cos(\sigma) )), \\
  &K^2_{q,q}(s,\sigma)
  =\widetilde{K^2}(\xi_{q}(\cos(s)),\xi_{q}(\cos(\sigma))) +
  \frac{K^1_{q,q}(s,\sigma)}{2} \log\left(\frac{
      R^2(\xi_{q}(\cos(s)),\xi_{q}(\cos(\sigma)))}{|\cos(s) -
      \cos(\sigma)|^2}\right),
\end{split}
\end{equation}
(where, for $s= \sigma$, an appropriate limit as $\sigma \to s$ is
taken for the fraction in the argument of the logarithm in
equations~\eqref{eq:smooth_case_kernel_transformation_same_q} and where
the quantity $K^1_{q,q}$ used in the second equation is defined in the
first equation), the required decomposition is
\begin{equation}\label{decomposition_q1_equal_q2}
K_{q,q} (s, \sigma) = K^1_{q,q}(s, \sigma) \log\left|\cos(s)-\cos(\sigma)\right| + K^2_{q,q}(s, \sigma).
\end{equation}

\item{\bf Case $q_1\not=q_2$ and $t$ is ``close'' to $[a_{q_2}, b_{q_2}]$\label{point3}}

  As mentioned in Remark~\ref{remark:kernel_singularitites}, in this
  case the kernel is nearly singular. A specialized procedure is
  described in what follows which, using
  equation~\eqref{linear_transformation_smooth_case} beyond its domain
  of definition---for values of $\rho$ and $\tau$ for which $|\tau|
  >1$---gives rise to a useful decomposition in the present case.  In
  detail, taking advantage of the smoothness of the curve $\Gamma$
  (which is assumed throughout this section) we use the changes of
  variables~\eqref{linear_transformation_smooth_case}
  and~\eqref{sinusoidal_cov} that relate $\tau$ to $\sigma$ to also
  express $t$ as a function of $s$. We thus define a function
  $r^\mathrm{out}(s)$ by means of the relation
\begin{equation}\label{weird_function}
t=\xi_{q_2}(r^\mathrm{out}(s) ) = \xi_{q_1}( \cos(s) );
\end{equation}
it is easy to check that, for a given $s\in [0,\pi]$, $r = r^\mathrm{out}(s)$ lies in the interval
\begin{equation}\label{s_int}
\left[\frac{2 a_{q_1} -a_{q_2} - b_{q_2}}{b_{q_2}-a_{q_2}} , \frac{2 b_{q_1} -a_{q_2} - b_{q_2}}{b_{q_2}-a_{q_2}}\right],
\end{equation} 
and, in particular, $r^\mathrm{out}(s)$ is {\em outside} the interval
$[-1,1]$.  Owing to the continuity of the boundary parametrization
$z(t)$, further, $r^\mathrm{out}(s)$ is close to either $1$ or $-1$
for values of $s$ near $0$ or $\pi$.

On the basis of the sinusoidal change of variables $\rho = \cos(\sigma)$ (cf.~\eqref{sinusoidal_cov})  and the reparametrization $r = r^\mathrm{out}(s)$ we can now produce the desired decomposition for the kernel~\eqref{eq:smooth_case_kernel_transformation_s_sigma_one_kernel}: letting
\begin{equation}
\label{eq:smooth_case_kernel_transformation}
\begin{split}
K^1_{q_1,q_2}(s,\sigma) =  2 \widetilde{K^1}(&\xi_{q_1}(  r^\mathrm{out}(s)),\xi_{q_2}(\cos(\sigma) )) ,\\
K^2_{q_1,q_2}(s,\sigma)  =\widetilde{K^2}(&\xi_{q_1}(r^\mathrm{out}(s)),\xi_{q_2}(\cos(\sigma)))\   + \\
&\frac{K^1_{q_1,q_2}(s,\sigma)}{2} \log\left(\frac{ R^2(\xi_{q_1}(r^\mathrm{out}(s)),\xi_{q_2}(\cos(\sigma)))}{|r^\mathrm{out}(s) - \cos(\sigma)|^2}\right),
\end{split}
\end{equation}
(where the quantity $K^1_{q_1,q_2}$ used in the second equation is
defined in the first equation) we obtain
\begin{equation}\label{decomposition_q1_close_q2}
K_{q_1,q_2}(s, \sigma) = K^1_{q_1,q_2}(s, \sigma) \log\left|r^\mathrm{out}(s) - \cos(\sigma)\right| + K^2_{q_1,q_2}(s, \sigma).
\end{equation}

\end{enumerate}

\subsubsection{FC-based algorithm: Numerical integration \label{sec:high_order_quadratures}} 
This section describes numerical methods for evaluation of the
integrals $\mathcal{I}_{q_1,q_2}$ (equation~\eqref{eq:before_fc}) for
the three cases considered in Section~\ref{ker_dec}. In each case
$2\pi$-periodic Fourier continuation expansions of the form
\begin{equation}\label{phi_j_q_series}
\phi_{q_1,q_2}^j(s,\sigma) \sim \sum _{\ell=0}^{n} \alpha^{j}_{\ell} \cos(\ell \sigma) +  \beta^{j}_{\ell} \sin(\ell \sigma)\qquad j=1,2
\end{equation}
(that is, partial Fourier continuation expansions in the variable
$\sigma$ with coefficients $\alpha^{j}_{\ell} = \alpha^{j}_{\ell}(s)$
and $\beta^{j}_{\ell} = \beta^{j}_{\ell}(s)$) are used, where
$\phi_{q_1,q_2}^j=\phi_{q_1,q_2}^j(s,\sigma)$ are certain smooth
functions of $s$ and $\sigma$ for $0\leq s,\sigma\leq \pi$. With
reference to equation~\eqref{eq:before_fc},
Lemma~\ref{smooth_in_theta} and
equations~\eqref{decomposition_q1_equal_q2}
and~\eqref{decomposition_q1_close_q2} (and as detailed in what
follows), in all three cases $\phi_{q_1,q_2}^j$ denotes the product of
$\varphi_{q_2}(\sigma)\sin (\sigma)$ and the relevant smooth function
that multiplies a singular $\log$ (which we may call the ``$\log$
prefactor'') for $j=1$, and the product of $\varphi_{q_2}(\sigma)\sin
(\sigma)$ and the smooth remainder term for $j=2$. Note that in case
1) of section~\ref{ker_dec} the $\log$ prefactor vanishes.

The numerical quadrature methods for each of the three cases considered in Section~\ref{ker_dec} are given in what follows.
\begin{enumerate}[1)]
\item{\bf Case $q_1\not=q_2$ and $t$ is ``far'' from
$[a_{q_2}, b_{q_2}]$}

From point ~\ref{point1}) in Section~\ref{ker_dec}, in this case we set 
\begin{equation}\label{phi_j_q_1}
\phi_{q_1,q_2}^1(s,\sigma) = 0\quad,\quad \phi_{q_1,q_2}^2(s,\sigma) = K_{q_1,q_2}(s,\sigma) \varphi_{q_2}(\sigma) \sin(\sigma).
\end{equation}
The desired quadrature rule for~\eqref{eq:before_fc} results from use of~\eqref{phi_j_q_series} and explicit evaluation of the integrals of sines and cosines in the resulting approximate expression
\begin{equation}\label{eq:before_fc_repeated}
\mathcal{I}_{q_1,q_2}[\varphi_{q_2}](s) \sim \frac{b_{q_2}-a_{q_2}}{2}  \sum_{\ell=0}^{n}  \int_0^{\pi}  [\alpha^{2}_{\ell}(s) \cos(\ell \sigma) +  \beta^{2}_{\ell}(s) \sin(\ell \sigma)] d\sigma.
\end{equation}

\item{\bf Case $q_1=q_2 = q$}

Using the kernel  decomposition~\eqref{decomposition_q1_equal_q2} we set 
\begin{equation}\label{phi_j_q_2}
\phi^1_{q,q}(s,\sigma) = K^1_{q,q}(s,\sigma) \varphi_{q}(\sigma) \sin(\sigma)\quad\mbox{and}\quad \phi^2_{q,q}(s,\sigma) = K^2_{q,q}(s,\sigma) \varphi_{q}(\sigma) \sin(\sigma),
\end{equation}
so that in view of~\eqref{phi_j_q_series} we have
\begin{equation}
\label{eq:before_fc2}
\begin{split}
\mathcal{I}^{(q,q)}[\varphi_{q}](s) \sim \frac{b_{q}-a_{q}}{2}&  \sum _{\ell=0}^{n}  \int_0^{\pi}  \log|\cos(s) - \cos(\sigma)| [\alpha^{1}_{\ell}(s) \cos(\ell \sigma) +  \beta^{1}_{\ell}(s) \sin(\ell \sigma)] d\sigma  \\
+ \frac{b_{q}-a_{q}}{2}&\sum _{\ell=0}^{n}   \int_0^{\pi}  [\alpha^{2}_{\ell}(s) \cos(\ell \sigma) +  \beta^{2}_{\ell}(s) \sin(\ell \sigma)] d\sigma;
\end{split}
\end{equation}
Our quadrature rule for~\eqref{eq:before_fc} in the present case thus results from explicit evaluation of integrals of sines and cosines as well as integrals of the form~\eqref{log_ints} with $r = \cos{s}$ (equations~\eqref{symms_operator} and~\eqref{eq:Am_formula} below).
\item{\bf Case $q_1\not=q_2$ and $t$ is ``close'' to $[a_{q_2}, b_{q_2}]$}

Using the decomposition~\eqref{decomposition_q1_close_q2} and setting
\begin{equation}\label{phi_j_q_3}
\begin{split}
\phi_{q_1,q_2}^1(s,\sigma) = K^1_{q_1,q_2}(s,\sigma) \varphi_{q_2}(\sigma) \sin(\sigma) \quad \mbox{and} \quad \phi_{q_1,q_2}^2(s,\sigma) = K^2_{q_1,q_2}(s,\sigma) \varphi_{q_2}(\sigma) \sin(\sigma),
\end{split}
\end{equation}
from~\eqref{phi_j_q_series} we have
\begin{equation}
\label{eq:before_fc1}
\begin{split}
\mathcal{I}_{q_1,q_2}[\varphi_{q_2}](s) \sim  \frac{b_{q_2}-a_{q_2}}{2}&  \sum _{\ell=0}^{n}  \int_0^{\pi}  \log|r^\mathrm{out}(s) - \cos(\sigma)| [\alpha^{1}_{\ell}(s) \cos(\ell \sigma) +  \beta^{1}_{\ell}(s) \sin(\ell \sigma)] d\sigma  \\
+ \frac{b_{q_2}-a_{q_2}}{2}&  \sum _{\ell=0}^{n} \int_0^{\pi}  [\alpha^{2}_{\ell}(s) \cos(\ell \sigma) +  \beta^{2}_{\ell}(s) \sin(\ell \sigma)] d\sigma.
\end{split}
\end{equation}
A quadrature rule for~\eqref{eq:before_fc} now results from explicit evaluation of integrals of sines and cosines as well as integrals of the form~\eqref{log_ints} with $r = r^\mathrm{out}(s)$ (equations~\eqref{symms_operator} and~\eqref{eq:Am_formula} below).
\end{enumerate}
The integrals~\eqref{log_ints} can be produced in closed form for all
real values of $r$ (cf. Remark~\ref{remark:log_integrals}). The well
known expressions for the log-cosine integrals (Symms
operator)~\cite{masonchebyshev}
\begin{equation}
\label{symms_operator}
\begin{split}
\int_0^{\pi} \log|r  - \cos(\sigma)| \cos(n\sigma) d\sigma &= \frac{1}{2 n} \cos(n \arccos(r)) \quad\mbox{for}\quad n \neq 0,\\
\int_0^{\pi} \log|r  - \cos(\sigma)| d\sigma & = \frac{\log(2)}{2} \quad\mbox{for}\quad n=0
\end{split}
\end{equation}
are valid provided $ \left| r \right| \leq 1$. The recently derived
expression~\cite{AkhBrunoReitich}
\begin{equation}
\label{eq:Am_formula}
\begin{split}
\int_0^{\pi}\log (r-\cos (\sigma ))e^{i n \sigma }d\sigma=(-i)&\left[-\frac{1-\omega_1 ^n}{n}\log \left|1-\omega _1\right|+\frac{(-1)^n-\omega _1 ^n}{n}\log \left|1+\omega _1\right|\right. \\
+\frac{1}{n}\sum _{j=0}^{n-1} \left(\omega _1 ^j+\omega_2 ^j\right)&\frac{\left(1-(-1)^{n-j}\right)}{n-j}-\frac{1-\omega _2 ^n}{n}\log |1-\omega _2|\\
+\frac{(-1)^n-\omega _2 ^n}{n}\log |1+\omega _2|-&i\pi  \frac{\omega _2 ^n}{n}-\left.\frac{1}{n^2}\left[1-(-1)^n\right]+\log (2) \frac{1-(-1)^n}{n} \right] ,
\end{split}
\end{equation}
where $\omega_1$ and $\omega_2$ are the roots of the polynomial
\begin{equation}
2 \omega r - \omega ^2-1=-\left(\omega -\omega _1\right)\left(\omega -\omega _2\right),
\end{equation}
holds for all real values of $r$; the real and imaginary parts of this expression provide the necessary log-cosine and log-sine integrals.

In view of the high-order convergence of the FC method (cf. Section~\ref{sec:numerical_results} and Appendix~\ref{sec:FC}), a high-order accurate algorithm for evaluation of $\mathcal{I}_{q_1,q_2}[\varphi]$  (and thus $\mathcal{\widetilde{I}}_{q_1,q_2}[\widetilde{\varphi}]$) on the sole basis of a uniform $\sigma$ mesh results through application of equations~\eqref{symms_operator} and~\eqref{eq:Am_formula} in conjunction with equations~\eqref{eq:before_fc_repeated},~\eqref{eq:before_fc2} and ~\eqref{eq:before_fc1} . 

\begin{remark}\label{rem_FC_graded}
  In the following section we propose an algorithm that is applicable
  in the case $\Gamma$ is a non-smooth but piecewise smooth curve
  $\Gamma$. While the methods of that section can also be used for
  smooth curves $\Gamma$, the FC-based methods introduced in the
  present section are generally significantly more efficient for a
  given prescribed error and more accurate for a given discretization
  size. The improvements that result from use of the FC-based approach
  are demonstrated in section~\ref{sec:corner_geometries} by means of
  a variety of numerical results.
\end{remark}

\subsection{Graded-mesh algorithm for evaluation of the integral
  operators~\eqref{eq:parametrized_operators} \label{sec:corner_geometries}}

As can be seen by consideration of
equations~\eqref{asymptoticsDensitycorner}
and~\eqref{asymptoticsDensity}, the presence of corners in the domain
boundary affects significantly the singular character of the Zaremba
integral density. In order to accurately approximate our integral
operators for domains with corners we utilize a quadrature
method~\cite{sag1964numerical,KUSSMAUL:1969,MARTENSEN:1963,KRESS:1990,COLTON:1998}
which, based on changes of variables that induce graded meshes and
vanishingly small Jacobians, regularize the associated integrands at
corners and thus enable high order integration even in presence of 
density singularities.

\subsubsection{Graded-mesh algorithm: Polynomial change of variables \label{subsection_cov}} 
A set of quadrature weights similar to those given
in~\cite[p. 75]{COLTON:1998} are incorporated in the present context
to account accurately for the logarithmic singularity of the kernel
and the singularities of the integral density at corners.  As
in~\cite{COLTON:1998} a graded mesh on each of the intervals $[a_q,
b_q]$, $q=1,\dots,Q_D+Q_N$ is induced by means of a polynomial change
of variables of the form $\tau = w_q(\sigma)$, where
\begin{equation}
\label{eq:change_variable}
\begin{split}
w_q(\sigma)&=a_q + (b_q-a_q) \frac{[v(\sigma)]^p}{[v(\sigma)]^p+[v(2\pi-\sigma)]^p},\quad 0\leq \sigma \leq 2\pi,\\
v(\sigma)& = \left(\frac{1}{p}-\frac{1}{2}\right)\left(\frac{\pi-\sigma}{\pi}\right)^3+\frac{1}{p}\frac{\sigma-\pi}{\pi}+\frac{1}{2},
\end{split}
\end{equation}
and where $p\geq 2$ is an integer. Each function $w_q$ is smooth and
increasing in the interval $[0, 2\pi]$, and their $k$-th derivatives
satisfy $w_q^{(k)}(0) = w_q^{(k)}(2\pi)= 0$ for $1 \leq k \leq p -
1$. 

\begin{remark}\label{remark:cancellations_corner0}
In addition to change of variables~\eqref{eq:change_variable} and
associated graded meshes, the method~\cite{COLTON:1998} for
domain with corners (which is only applicable to the Dirichlet
problem) relies on a certain subtraction of values of the integral density at corner points times
a Gauss integral to provide additional regularization of the
integration process. The algorithms in this paper, which can be used
to treat all three, the Dirichlet, Neumann and Zaremba boundary value
problems, do not incorporate any such subtraction, however;
see~Remark~\ref{remark:cancellations_corner} for a brief discussion in
these regards.)
\end{remark}

In detail, the integrand in
equation~\eqref{eq:parametrized_operators} contains
singularities of various types, namely
\begin{enumerate}
\item\label{one} Singularities that result solely from corresponding
  singularities in the density $\widetilde{\varphi}$---in the term
  $\widetilde{K^1}(t,\tau) \log R^2(t,\tau)\widetilde{\varphi}(\tau)$
  for $q_2\ne q_1$ and in the term
  $\widetilde{K^2}(t,\tau)\widetilde{\varphi}(\tau)$ for both $q_2\ne
  q_1$ and $q_2= q_1$; and
\item\label{two} Combined singularities induced by the density and
  the logarithmic factor---in the term $\widetilde{K^1}(t,\tau) \log
  R^2(t,\tau)\widetilde{\varphi}(\tau)$ for $q_1 = q_2$.
\end{enumerate}
\begin{remark}
\label{near_singularity_no_problem}
Concerning point~\ref{one} above note that, although for $q_1\not=q_2$
the factor $\log R^2( t,\tau)$ is smooth, this term does give rise to
a logarithmic near-singularity for $t$ close to either $a_{q_2}$ or
$b_{q_2}$. It is easy to check, however, that the approach provided
below for treatment of the singular character of $\widetilde{\varphi}$
suffices to account with high-order accuracy for the near-logarithmic
singularity as well.
\end{remark}
Using the change of variables~\eqref{eq:change_variable} for both integration and observation variables, that is, setting $t = w_{q_1}(s)$ and $\tau = w_{q_2}(\sigma)$,  the integral~\eqref{eq:parametrized_operators} can be re-expressed in the form
\begin{equation}
\begin{split}
\label{eq:parametrized_operators_corner}
\mathcal{I}_{q_1,q_2}[\varphi](s) =   \int_{0}^{2\pi}&\widetilde{K^1}(w_{q_1}(s),w_{q_2}(\sigma)) \log R^2( w_{q_1}(s),w_{q_2}(\sigma))\varphi_{q_2}(\sigma)  w_{q_2}'(\sigma) \de \sigma +\\
\int_{0}^{2\pi}&\widetilde{K^2}(w_{q_1}(s),w_{q_2}(\sigma)) \varphi_{q_2}(\sigma)  w_{q_2}'(\sigma) \de \sigma,
\end{split}
\end{equation}
where  $\varphi_{q_2}(\sigma) = \widetilde{\varphi}(w_{q_2}(\sigma))$. This procedure effectively treats the density singularities mentioned in point~\ref{one} above. Indeed, since values $p \ge 2$ are used for the parameter $p$ in equation~\eqref{eq:change_variable} and given the singular character~\eqref{asymptoticsDensitycorner} of the density $\widetilde{\varphi}$, the product $\varphi(\sigma)  w_{q_2}'(\sigma)$  is smoother than $\widetilde{\varphi}$: this product can be made to achieve any finite order of differentiability by selecting $p$ large enough. 

To deal with the singularities mentioned in point 2 above, on the other hand, we utilize the following notations: for $q_1 = q_2 = q$, we let
\begin{equation}\label{transformatin_kernels_corner}
\begin{split}
K^1_{q,q}(s,\sigma) &= \widetilde{K^1}(w_{q}(s),w_{q}(\sigma)), \\
K^2_{q,q}(s,\sigma)&=\widetilde{K^1}(w_{q}(s),w_{q}(\sigma)) \log\left(\frac{R^2(w_{q}(s),w_{q}(\sigma))}{4\sin^2\frac{s-\sigma}{2}}\right) + \widetilde{K^2}(w_{q}(s),w_{q}(\sigma)). 
\end{split}
\end{equation}
Note that the  ``diagonal term''  that occurs in the kernel $K^2_{q,q}$ for $s=\sigma$ is given by $K^2_{q,q}(s,s)=2 \widetilde{K^1}(w_{q}(s),w_{q}(s)) \log(w_{q}'(s)|z'(w_{q}(s))|)+\widetilde{K^2}(w_{q}(s),w_{q}(s))$. Using these transformations the integrals~\eqref{eq:parametrized_operators_corner} for $q_1 = q_2 = q$ can be re-expressed in the form
\begin{equation}
\label{eq:parametrized_operators_corner_reexpressed}
\mathcal{I}_{q,q}[\varphi_{q}](s)  =   \int_{0}^{2\pi} K^1_{q,q}(s,\sigma)  \log(4\sin^2\frac{s-\sigma}{2}) \varphi_{q}(\sigma)  w_{q}'(\sigma) \de \sigma+ \int_{0}^{2\pi} K^2_{q,q}(s,\sigma) \varphi_{q}(\sigma)  w_{q}'(\sigma) \de \sigma.
\end{equation}

\subsubsection{Graded-mesh algorithm: Discretization and quadratures \label{subsection_discretization_corner}}

In view of the discussion presented in Section~\ref{subsection_cov}
our overall numerical algorithm for evaluation of the
integrals~\eqref{eq:parametrized_operators_corner} (and thus
~\eqref{eq:parametrized_operators}) proceeds through separate
consideration of the cases $q_1=q_2=q$ and $q_1\not=q_2$. In the case
$q_1 = q_2 = q$ we utilize the
expression~\eqref{eq:parametrized_operators_corner_reexpressed}: the
first (resp. second) integral in this equation is evaluated by means
of the logarithmic quadrature~\eqref{eq:logarithmic_quadrature_corner}
below (resp. the spectrally accurate trapezoidal
rule~\eqref{eq:trapezoidal_rule} below). For the case $q_1\not=q_2$,
on the other hand, we use the
expression~\eqref{eq:parametrized_operators_corner} directly: we
combine both integrals into one which is then evaluated by means of
the trapezoidal rule~\eqref{eq:trapezoidal_rule}. The logarithmic and
trapezoidal rules mentioned above proceed as follows:
\begin{itemize}
\item{Logarithmic quadrature ($q_1=q_2 = q$)}.

We consider integrals whose integrand, like the one in the first integral in equation~\eqref{eq:parametrized_operators_corner_reexpressed}, consists of a product of a smooth $2\pi$-periodic function $f$ times the logarithmic factor $\log\left(4\sin^2\frac{s-\sigma}{2}\right)$. Such integrals are produced with spectral accuracy by means of the rule
\begin{equation}
\label{eq:logarithmic_quadrature_corner}
\int_0^{2\pi}f(\sigma)\log\left(4\sin^2\frac{s-\sigma}{2}\right)\de \sigma\sim \sum_{j=1}^{2n}R_j^{(n)}(s)f(\sigma_j),
\end{equation}
where $\sigma_j=(j-1)\pi/n$, $n\in\mathbb{N}$ and where the quadrature weights $R_j(s)$ are given by~\cite[p. 70]{colton1984}
\begin{equation*}
R_j(s)  = -\frac{2\pi}{ n }\sum_{m=1}^{n-1}\frac{1}{m}\cos m(s-\sigma_j)-\frac{\pi}{n^2}\cos n(s-\sigma_j).
\end{equation*}
Following~\cite{bruno2012second} we note that, letting
\begin{equation*}
R_k = -\frac{2\pi}{n}\sum_{m=1}^{n-1}\frac{1}{m}\cos\frac{m k \pi}{n}-\frac{(-1)^k\pi}{n^2}
\end{equation*}
we have $R_j(\sigma_i) = R_{|i-j|}$---so that the weights $R_j(\sigma_i)$ can be evaluated rapidly by means of Fast Fourier Transforms.

\item{Trapezoidal rule.} 

As is well known, spectrally accurate integrals of smooth $2\pi$-periodic functions $f$ can be obtained by means of the trapezoidal rule 
\begin{equation}
\int_0^{2\pi}f(\sigma)\de\sigma \sim\frac{\pi}{n}\sum_{j=1}^{2n}f(\sigma_j),\label{eq:trapezoidal_rule}
\end{equation}
where again $\sigma_j=(j-1)\pi/n$. 
\end{itemize}

\begin{remark}
\label{remark:cancellations_corner}
With reference to Remark~\ref{remark:cancellations_corner0},
subtraction of a certain multiple of a Gauss integral can be used in
the case of the Dirichlet problem to somewhat mollify corner
singularities and thereby enhance the convergence of the numerical
integration method. Considering the
expressions~\eqref{asymptoticsDensitycorner} for the singularities in
the density functions, even without the subtraction the method
described above in this section is easily checked to be consistent
with the system~\eqref{modelC} of integral equations for sufficiently
large value of $p$. Although a proof of the stability of the method is
left for future work, the numerical results in this paper (see e.g.
Figure~\ref{fig:fc_vs_ck}) strongly suggest that perfect stability
results from this approach. As the value of $\alpha$ grows, however,
the minimum required value of $p$ grows as well, thereby increasing the
condition number of the system. This difficulty can alternatively be
addressed by means of a singularity resolution methodology introduced
in~\cite{bruno2009high}---a full development of which is beyond the
scope of this paper and which is thus left for future work.
\end{remark}

\subsection{Discrete boundary integral operator \label{Section:discretization_matrix}}

This section presents the main algorithm for evaluation of the discrete version of the form~\eqref{modelD} of the boundary operator~\eqref{modelC1} (cf.~\eqref{modelB} or, equivalently,~\eqref{eq:integrals_decomposed}); the discretization procedure relies on use of  the high-order quadrature methods described in Sections~\ref{sec:smooth_geometries} and~\ref{sec:corner_geometries} for  evaluation of each on of the integrals in equation~\eqref{eq:integrals_decomposed}.

We denote by $n_q$ the number of discretization points used on the boundary segment $\Gamma_q$, $q=1,\dots,Q_D+Q_N$ and we call 
\begin{equation}\label{N_sum_nq}
n = \sum_{q=1}^{Q_D+Q_N} n_q
\end{equation}
the number of discretization points used throughout $\Gamma$. The discrete algorithms introduced in this paper rely on use of the uniform grids 
\begin{equation}\label{sigma_q_j}
\sigma_{q}^j =(j-1)\gamma \pi/n_q\quad ,\quad j=1,\dots, n_q
\end{equation}
in the interval $0\leq \sigma \leq \gamma\pi$, where $\gamma=1$ for the FC-based algorithm (Section~\ref{sec:smooth_geometries}) and $\gamma = 2$ for the graded-mesh algorithm (Section~\ref{sec:corner_geometries}). The corresponding points $\tau_{q}^j$
 in the parameter space are given by $\tau_{q}^j = \xi_{q}(\cos(\sigma_{q}^j))$ in the FC-based algorithm (see equation~\eqref{linear_transformation_smooth_case}), and by $\tau_{q}^j = w_{q}(\sigma_{q}^j)$ in graded-mesh algorithm. 

\begin{remark}\label{remark:select_nq}
  The following procedure is suggested for determination of the values
  of the parameters $n_q$ mentioned above. Given a desired meshsize
  $h\in\mathbb{R}$ (which should be selected so as to appropriately
  discretize the highest spatial oscillations under consideration) we
  take $n_q=\max \{n_h,n_0\}$ where $n_h$ is the smallest integer for
  which the distance between any two consecutive points in $\Gamma_q$
  is not larger than $h$, and where $n_0$ is an integer whose role is
  to ensure that the number of discretization points in each boundary
  segment is not less than the minimum number of discretization points
  required by the method used (either the Fourier Continuation method,
  see Appendix~\ref{sec:FC}, or the graded-mesh algorithm,
  cf. equation~\eqref{eq:change_variable}) to guarantee the desired
  convergence rate takes place.
\end{remark} 
\begin{remark}\label{phi_only_discrete_values}
  We point out that in both the FC-based and graded-mesh algorithms
  (Sections~\ref{sec:smooth_geometries}
  and~\ref{sec:corner_geometries}, respectively) the approximations of
  the values $\mathcal{I}_{q_1,q_2}[\varphi_{q_2}](\sigma_{q_1}^j)$
  used by our algorithms only depend on values of $\varphi_{q_2}$ at
  the points $\sigma_{q_2}^{j}$, $j=1,\dots,n_{q_2}$. In the smooth
  domain case this indeed results from the fact that an $m$-th order
  Fourier continuation $f^c$ of a function $y=f(x)$ only depends on
  the values of $f$ at the discretization points $x_i$ (see
  Appendix~\ref{sec:FC} and take into account
  equations~\eqref{phi_j_q_series},~\eqref{eq:before_fc_repeated},~\eqref{eq:before_fc2}
  and~\eqref{eq:before_fc1}). For the graded-mesh case, in turn, this
  follows from the fact that the quadrature
  rules~\eqref{eq:logarithmic_quadrature_corner},~\eqref{eq:trapezoidal_rule}
  only use values of the density $\varphi_{q_2}$ at the points
  $\sigma^j_{q_2}$.
\end{remark}

In view of Remark~\ref{phi_only_discrete_values} and
equation~\eqref{N_sum_nq} and associated text, a discrete version of
the integral density $\psi$ can be obtained in the form of an
$n$-dimensional vector of unknowns
\begin{equation*}
{\tt c}  = \begin{bmatrix} {\tt c}_{1} \\ \dots \\ {\tt c}_{Q_N+Q_D} \end{bmatrix}
\end{equation*}
where ${\tt c}_{q}$ is a sub-vector of length $n_q$ which contains the
approximate unknown density values at the points $\sigma_{q}^j$: ${\tt
  c}_{q}^j \sim \varphi_{q}(\sigma_{q}^j)$.  An approximate boundary
operator~\eqref{modelD} based on either the FC method (for smooth
curves $\Gamma$) or the graded mesh method (for smooth or non-smooth
curves $\Gamma$) can thus be obtained in the form of a matrix
\begin{equation}\label{A_MU}
{\tt A}_{\mu} = \begin{bmatrix} ({\tt A}_{\mu})_{1,1} & ({\tt A}_{\mu})_{1,2} & \dots & ({\tt A}_{\mu})_{1,Q_d+Q_n} \\ \dots \\ ({\tt A}_{\mu})_{Q_D+Q_N,1} & ({\tt A}_{\mu})_{Q_D+Q_N,2} & \dots & ({\tt A}_{\mu})_{Q_D+Q_N,Q_D+Q_N} \end{bmatrix}.
\end{equation} 
Here the sub-blocks $({\tt A}_{\mu})_{q_1,q_2}$ are discrete operators
which for $q_1 = q_2 =q\in J_N$ approximate the continuous operators
$-I/2 + \mathcal{I}_{q,q}$ (where $I$ is the identity operator):
\begin{equation}\label{eq:neumann_boundary_jump}
-\frac{\varphi_{q}}{2} + \mathcal{I}_{q,q}[\varphi_{q}](\sigma_{q}^j ) \sim \sum_{j=1,n_{q}} ({\tt A}_{\mu})_{q,q}^{i,j} {\tt c}_{q}^j  , \qquad i=1,\dots,n_{q},
\end{equation} 
and which for all other pairs of indexes $q_1,q_2 = 1,\dots,Q_D+Q_N$
approximate the continuous
operators $\mathcal{I}_{q_1,q_2}$:
\begin{equation*}
 \mathcal{I}_{q_1,q_2}[\varphi_{q_2}](\sigma_{q_2}^j ) \sim\sum_{j=1,n_{q_2}} ({\tt A}_{\mu})_{q_1,q_2}^{i,j} {\tt c}_{q_2}^j  , \qquad i=1,\dots,n_{q_1}.
\end{equation*} 
In cases in which an overall FC-based method is used the blocks
$({\tt A}_{\mu})_{q,q}$ are matrices which encapsulate the various
integration methods described in Section~\ref{sec:smooth_geometries};
if the graded-mesh method is used instead then the blocks $({\tt
  A}_{\mu})_{q,q}$ collect the contributions produced by the
quadrature methods presented in Section~\ref{sec:corner_geometries}.

Details of the algorithm used to produce the blocks $({\tt
  A}_{\mu})_{q_1,q_2}$ are given in
Algorithms~\ref{matrix_construction_smooth}
and~\ref{matrix_construction_corner} below. The input parameters in
these algorithms are to be selected in accordance with
Remarks~\ref{remark:cases} and~\ref{remark:select_nq}.

\begin{algorithm}[h!]
\caption{Construction of the matrix block $({\tt A}_{\mu})_{q_1,q_2}$ for the FC-based algorithm}\label{matrix_construction_smooth}
\begin{algorithmic}[1]
\State 
Input $q_1$, $q_2$, $n_{q_1}$ and $n_{q_2}$ (Section~\ref{Section:discretization_matrix}).
\For{$j_2=1: n_{q_2}$}
\State Let ${\tt c}_{q_2}^{j_2}=1$ and  ${\tt c}_{q_2}^{j}=0$ for $j=1, n_{q_2}, j\not = j_2$ (cf. Remark~\ref{phi_only_discrete_values}).
\For{$j_1=1: n_{q_1}$} 

\State 
With reference to Cases 1 through 3 in Section~\ref{ker_dec},
calculate $\phi_{q_1,q_2}^1(\sigma_{q_1}^{j_1},\sigma_{q_2}^{j_2})$ and $\phi_{q_1,q_2}^2(\sigma_{q_1}^{j_1},\sigma_{q_2}^{j_2})$ using eq.~\eqref{phi_j_q_1}  in case 1, eq.~\eqref{phi_j_q_2} in case 2 and eq.~\eqref{phi_j_q_3} in case 3. 

\State
Calculate the coefficients $\alpha^1_{\ell},\beta^1_{\ell},\alpha^2_{\ell},\beta^2_{\ell}$ of the Fourier Continuation expansions~\eqref{phi_j_q_series} using the FC algorithm (see Appendix~\ref{sec:FC}).

\If {$q_1=q_2 =:q$}
\State  Evaluate $({\tt A}_{\mu})_{q,q}^{j_1, j_2}$ using the approximation in eq.~\eqref{eq:before_fc2}.

\If {$q \in J_N \mbox{ and } j_1=j_2 $}
\State Add the identity part corresponding to the jump of the density (eq.~\eqref{eq:neumann_boundary_jump}).
\EndIf

\Else

\If {$\tau_{q_1}^{j_1}$ is ``far'' from the interval $[a_{q_2},b_{q_2}]$ (condition~\eqref{far_close_condition}) }

\State  Evaluate $({\tt A}_{\mu})_{q_1,q_2}^{j_1, j_2}$ 
using the approximation in eq.~\eqref{eq:before_fc_repeated}.

\Else

\State  Evaluate $({\tt A}_{\mu})_{q_1,q_2}^{j_1, j_2}$ 
using the approximation in eq.~\eqref{eq:before_fc1}.

\EndIf

\EndIf

\EndFor
\EndFor

 \end{algorithmic} \end{algorithm}

\begin{algorithm}[h!]
\caption{Construction of the matrix block $({\tt A}_{\mu})_{q_1,q_2}$ for graded-mesh algorithm}\label{matrix_construction_corner}
\begin{algorithmic}[1]
\State
Input $q_1$, $q_2$, $n_{q_1}$ and $n_{q_2}$ (Section~\ref{Section:discretization_matrix}).
\For{$j_2=1:n_{q_2}$}
\State 
Let ${\tt c}_{q_2}^{j_2}=1$ and  ${\tt c}_{q_2}^{j}=0$ for $j=1, n_{q_2}, j\not = j_2$  (cf. Remark~\ref{phi_only_discrete_values}).
\For{$j_1=1: n_{q_1}$}
\If {$q_1=q_2 =:q$}
\State  Evaluate $({\tt A}_{\mu})_{q,q}^{j_1, j_2}$ 
using the decomposition~\eqref{eq:parametrized_operators_corner_reexpressed} via combination of the logarithmic quadrature~\eqref{eq:logarithmic_quadrature_corner} and the trapezoidal rule~\eqref{eq:trapezoidal_rule}.

\If {$q \in J_N \mbox{ and } j_1=j_2$}
\State Add the identity part corresponding to the jump of the density (eq.~\eqref{eq:neumann_boundary_jump}).
\EndIf
\Else
\State  Evaluate $({\tt A}_{\mu})_{q_1,q_2}^{j_1, j_2}$ 
using the decomposition~\eqref{eq:parametrized_operators_corner} and the trapezoidal rule~\eqref{eq:trapezoidal_rule}.
\EndIf
\EndFor
\EndFor

 \end{algorithmic} \end{algorithm}
\section{Eigenvalue search \label{Section:EVP}}

This section presents an efficient algorithm for eigenvalue
search---which, in the context of the present paper, amounts to search
for values of $\mu$ in a given range $[\mu_{min}, \mu_{max}]$ for
which the statement~\eqref{modelC} is satisfied. The search algorithm
presented below can be utilized in conjunction with any numerical
discretization of the operator~\eqref{modelC1} and, indeed, it can be
applied to integral formulations of more general eigenvalue
problems. Naturally, however, in this paper we apply our search
algorithm in combination with the discrete version ${\tt A}_{\mu}$ of
the operator $\mathcal{A}_\mu$ (cf. equations~\eqref{modelC1}
and~\eqref{A_MU}) which results from suitable applications of the
quadrature rules presented in
Sections~\ref{bound_dec}--\ref{Section:discretization_matrix} to the
operators~\eqref{eq:parametrized_operators}.
 
\begin{figure}[h!]
\centering
  \includegraphics[width=5in]{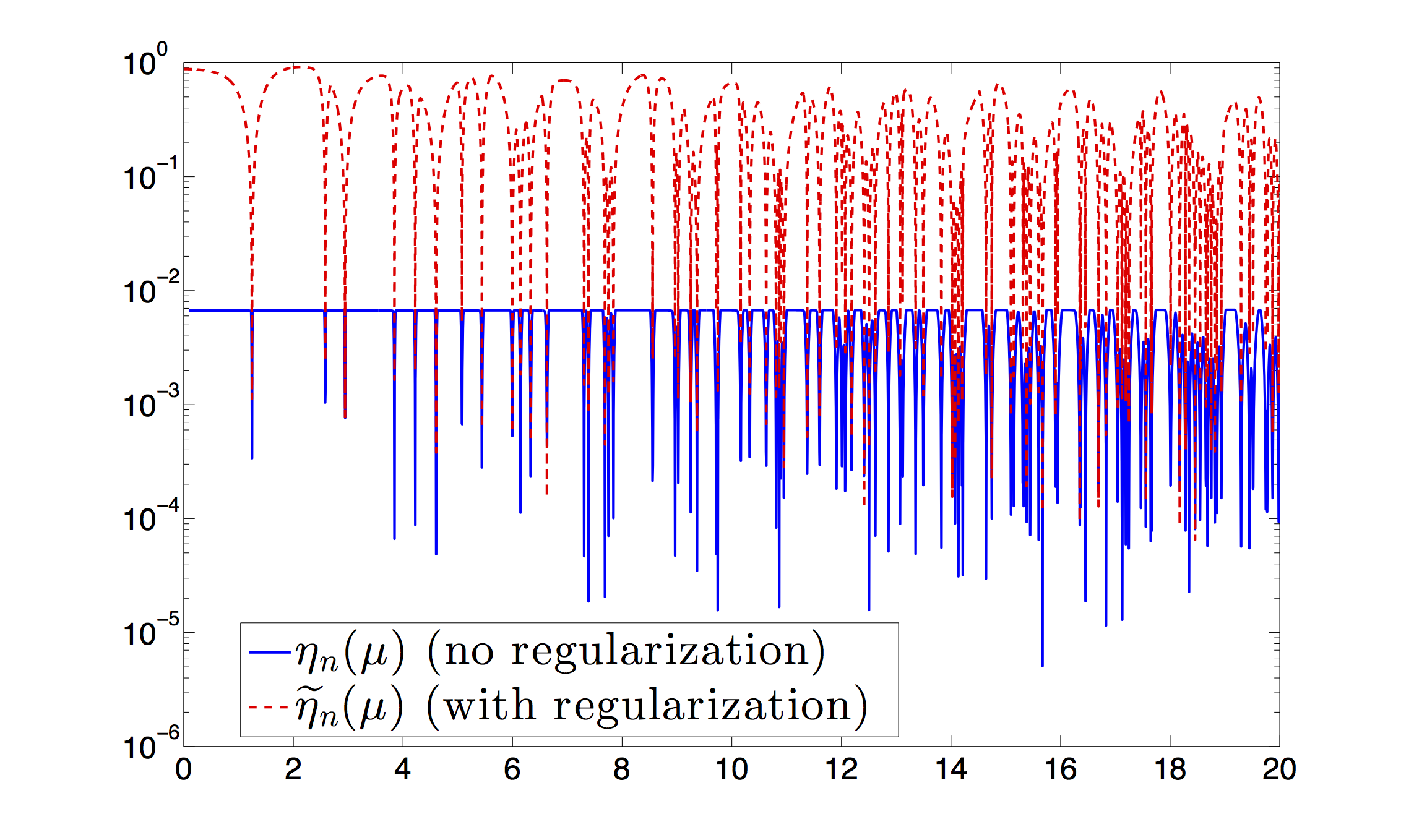}
 \caption{Comparison between $\eta_n(\mu)$ and  $\widetilde{\eta}_n(\mu)$}
  \label{fig:comparison}
\end{figure}
	
\begin{figure}[h!]
\centering
\includegraphics[width=3in]{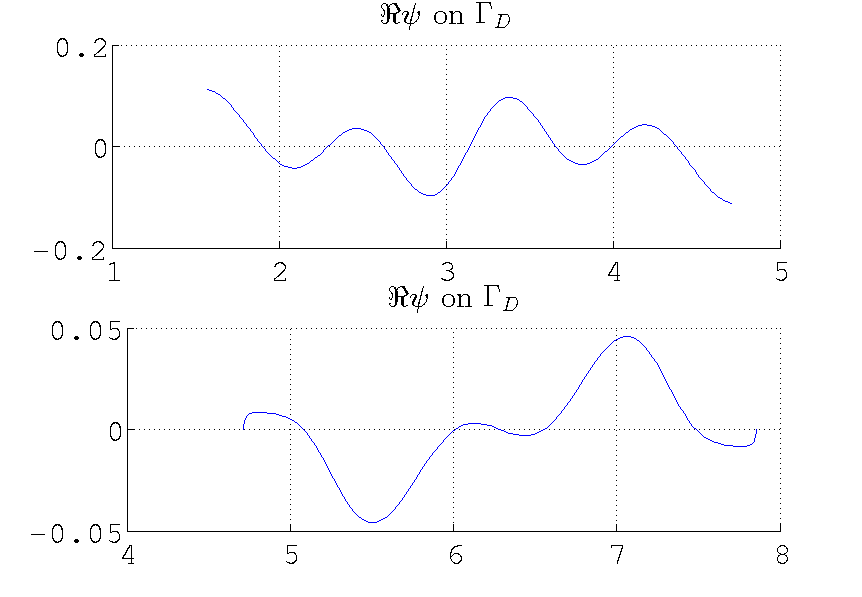}
\includegraphics[width=3in]{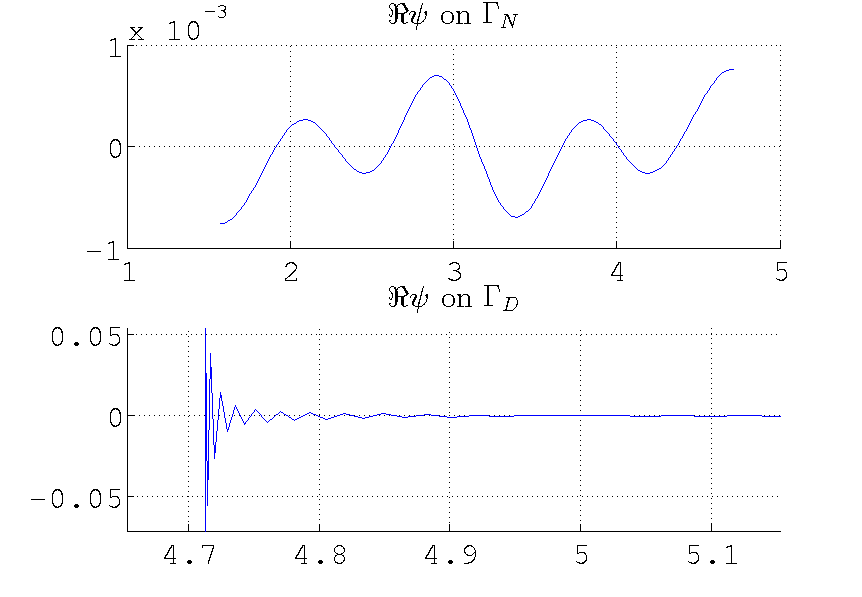}
\caption{Densities (singular vectors) corresponding to smallest
  singular values for a formulation without regularizing interior
  points. Left column, $\mu^2$ is an eigenvalue: vanishingly small
  values of the singular value $\eta_n(\mu)$ result for densities that
  are not rapidly oscillatory. Right column: $\mu^2$ is not an
  eigenvalue. Note the oscillations on the Dirichlet portion of the
  density (lower-right image) which give rise to a small singular
  value $\eta_n(\mu)$ even in this case in which $\mu^2$ is not a
  Dirichlet-Neumann eigenvalue.}
\label{fig:eigenvector_smooth}
\end{figure}
\subsection{Discussion \label{Section:simple scan}}

In view of~\eqref{modelC} and associated text, the eigenvalues $\lambda$ in equation~\eqref{modelA} can be approximated by the squares of the values $\mu$ for which the corresponding matrix ${\tt A}_{\mu}$ is not invertible. Thus all approximate eigenvalues $\lambda_j=\mu_j^2$ of the problem~\eqref{modelA} in a given interval $[\lambda_{min}, \lambda_{max}]$ can be obtained from the values of $\mu\in [\sqrt{\lambda_{min}}, \sqrt{\lambda_{max}}]$  for which  the minimum singular value $\eta_n(\mu)$ of the matrix ${\tt A}_{\mu}$ equals zero---or is otherwise sufficiently  close to zero.  

Unfortunately, this approach presents significant challenges in
practice---as was noted in~\cite{duran_nedelec,steinbachDirichlet} in
connection with applications to related Dirichlet problems for the
Laplace equation (but cf. Remark~\ref{first_kind}). The difficulty is
demonstrated in Figure~\ref{fig:comparison} (solid curve) which
displays the function $\eta_n(\mu)$ for values of $\mu$ in the
interval $[0, 20]$ for the Zaremba eigenproblem~\eqref{modelA} on a
unit disc (where Dirichlet and Neumann boundary conditions are
prescribed on the upper and lower halves of the disc
boundary). Clearly, the function $\eta_n(\mu)$ stays at a nearly
constant level except for narrow regions around minima. This makes the
derivative of $\eta_n(\mu)$ nearly 0 throughout most of the search
domain, and, thus, renders efficient application of root-finding
methods virtually impossible.

The occurrence of this adverse characteristic of the function
$\eta_n(\mu)$ can be explained easily by consideration
of~\eqref{modelC} and associated text. Indeed, in view of the
Riemann-Lebesgue lemma, arbitrarily small values of $\mathcal A_{\mu}
\psi$ can be obtained by selecting densities $\psi$ leading to
functions $\phi_{q_1,q_2}^j$ (equation~\eqref{phi_j_q_series}) which
equal highly oscillatory functions of $\sigma$ on the Dirichlet
boundary portion $\Gamma_D$ and which are close to zero on the Neumann
boundary portion $\Gamma_N$; see
Figure~\ref{fig:eigenvector_smooth}. At the discrete level, further,
for any given mesh-size $n$ only oscillatory functions up to a certain
maximal oscillation level are supported. Consequently, as $n$ (and
therefore the maximal oscillation level) are increased, the minimum
singular value $\eta_n(\mu)$ (which equals the minimum mean-square
norm of $\tt A_{\mu} {\tt c}$ for ${\tt c}$ in the unit sphere) itself
decays like $1/n$, without significant dependence on $\mu$---except
for cases that correspond to actual eigenvalues. In order to devise a
solution for this problem we note that the continuous analog of our
minimum singular value (namely, the infimum of $||\mathcal A_{\mu}
\psi||$ over all densities $\psi$ of unit norm) is actually equal to
zero for all values of $\mu$. But, naturally, a minimizing sequence
$\psi_k$ for which the operator values approach this infimum gives
rise to single-layer potentials $u_k$ that approach zero within
$\Omega$ as well---and, thus, such sequences $u_k$ do not approach
true eigenfunctions. A solution strategy thus emerges: a normalization
for the values of the single layer potential $u$ (eq.~\eqref{ansatz})
{\em in the interior of $\Omega$} can be used to eliminate such
undesirable minimizing sequences.  Details on possible implementations
of this strategy are presented in the following section.

\begin{remark}\label{first_kind}
  From the discussion above in this section it is easy to see that
  difficulties associated with highly oscillatory integrands only
  occur in cases in which the boundary integral operator is entirely
  or partially of the first kind: for second-kind integral equations
  such complications do not
  arise~\cite{zhao2014robust,kuo2000applications}. We note, however,
  that use of (partial or full) first-kind formulations can be highly
  advantageous in some cases (such as, e.g., for the problems
  considered in this paper!) for which use of second-kind equations
  would necessarily require inclusion of hypersingular
  operators---which are generally significantly more challenging from
  a computational perspective; see e.g.~\cite{bruno2013high}. The
  normalization techniques mentioned in Section~\ref{Introduction} and
  discussed in more detail in Section~\ref{Section:modified
    algorithm} completely resolves the difficulty arising from use of
  first-kind formulations and enables successful use of
  numerically-well-behaved, easy-to-use first-kind equations for
  solution of eigenvalue problems for general domains.
\end{remark}

\subsection{Eigenfunction normalization. \label{Section:modified algorithm}}
The difficulties outlined in the previous section can be addressed by consideration of a modified discrete system of equations which, by enforcing an appropriate normalization in the domain interior, as it befits eigenfunctions of a differential operator, prevents oscillatory vectors ${\tt c}$ to give rise to small values of the  ${\tt A}_\mu {\tt c}$ unless $\mu$ corresponds to an actual eigenvalue; cf. Section~\ref{Section:simple scan}. 
\begin{figure}[h!]
\centering
  \includegraphics[width=2in]{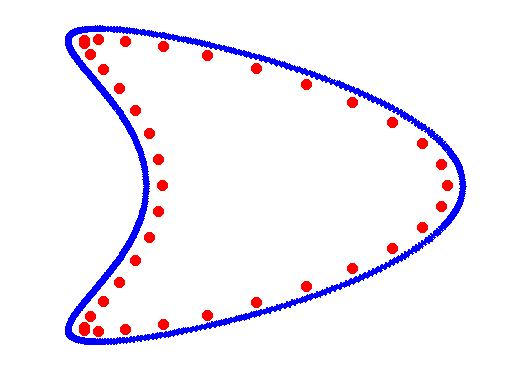}
  \caption{Interior curve $\Gamma_i$. An adequate discretization of
    $\Gamma_i$, possibly significantly coarser than the discretization
    on the boundary curve $\Gamma$, is used to penalize vanishingly
    small Laplace eigenfunctions.}
  \label{fig:dual_surface}
\end{figure}

To enforce such a normalization we consider equations~\eqref{ansatz}
and~\eqref{modelB}, and we define an additional operator ${\mathcal
  A}^{(3)}_\mu$ by
\begin{equation}\label{ansatz2}
\mathcal{A}^{(3)}_\mu[ \psi](x)  =  \displaystyle \int _{\Gamma}G_\mu(x,y)\psi (y)ds_y \qquad \mbox{for } x \in \Gamma_i,
\end{equation}
where $\Gamma_i \subseteq \Omega$ is an adequately selected set of points in the interior of $\Omega$. 
A natural choice is given by $\Gamma_i = \Omega$, in such a way that the normalization condition becomes $\displaystyle \int _{\Omega} |u|^2 dx = 1 $. Other normalizations can be used, however, which lend themselves more easily to discretization. For example, letting $\Gamma_i$ be a curve roughly parallel to $\Gamma$ at a distance no larger than $\lambda_u/2$, cf. Figure~\ref{fig:dual_surface} (where $\lambda_u$ denotes the eigenfunction ``wavelength''  $\lambda_u= 2\pi /\mu$), one might equivalently prescribe
\begin{equation}\label{constraint}
\displaystyle \int _{\Gamma_i} |u|^2 d\ell = 1.
\end{equation} 
Indeed, given that $\Gamma_i$ is at a distance no larger than $\lambda_u/2$ from $\Gamma$, we expect that 
\begin{equation}\label{gammai_non_zero}
\mbox{``The eigenfunction must be nonzero in a subset of $\Gamma_i$ of  positive measure''.}
\end{equation} 
In the case of Dirichlet boundary conditions this statement is strongly supported by the eigenvalue bounds put forth  in~\cite{borisov2010asymptotics} and by the discussion in~\cite{woodworth1991derivation}. In the case of the Zaremba Dirichlet-Neumann boundary conditions we have not as yet found a corresponding theoretical discussion, but, in view of strong numerical evidence, throughout this paper we nevertheless assume~\eqref{gammai_non_zero} holds.

\begin{remark}\label{continuous_appended_operator}
It is useful to note that, assuming~\eqref{gammai_non_zero}, there is no non-zero density $\psi$ for which the equations ${\mathcal  A}_\mu[\psi] = 0$ and ${\mathcal  A}^{(3)}_\mu[\psi] = 0$ hold simultaneously.  Indeed, if these null conditions hold,~\eqref{gammai_non_zero} implies that the function $u$ defined by~\eqref{ansatz} vanishes throughout $\Omega$. In view of the uniqueness of solution of the Helmholtz equation in a exterior domain, further, we conclude that $u$ vanishes throughout $\mathbb{R}^2$.  Taking into account the jump conditions for the normal derivative of the single layer potential this implies that $\psi=0$, as desired.
\end{remark}

A discrete version of the normalization condition~\eqref{constraint} can be obtained by means of a suitable, possibly equispaced discretization $\{ x_j, j=1,m\} \subseteq \Gamma_i$ together with an associated discrete operator ${\tt B}_\mu$ which, based on the quadrature rules for smooth integrands described in Section~\ref{Section:eigenfunctions}, approximates values of ${\mathcal  A}^{(3)}_\mu$ at the points $x_j$:
\begin{equation}\label{eq:b_mu}
{\tt B}_\mu{\tt c}  \sim [u(x_j)].
\end{equation}
Defining the rectangular matrix
\begin{equation}
{\tt C}_\mu = \begin{bmatrix} {\tt A}_\mu \\ {\tt B}_\mu \end{bmatrix},
\label{rectangular}\end{equation}
in the present discrete context (and for a sufficiently fine discretization $\{ x_j, j=1,m\} \subseteq \Gamma_i$) Remark~\eqref{continuous_appended_operator} tells us that  the columns of the matrix ${\tt C}_\mu$ ought to be linearly independent. The normalization condition can be enforced by utilizing a QR-factorization 
\begin{equation*}
{\tt C}_\mu = \neQ \neR;
\end{equation*}
in accordance with equation~\eqref{rectangular}, further, we express the matrix $\neQ$ in terms of matrices comprising of its first $n$ rows and the remaining $m$ rows:
\begin{equation}\label{eq:q_a}
 \neQ = \begin{bmatrix} \neQ_A \\ \neQ_B \end{bmatrix}.
\end{equation}
(In a  related but different context, a QR factorization was used in~\cite{trefethen2005} to reduce or even eliminate difficulties associated with the method of particular solution for evaluation of Laplace eigenvalues; see Remark~\ref{tref_betcke} for details.)

The linearly independent columns of the matrix ${\tt C}_\mu$ are in
fact (discrete) approximate solutions of the Helmholtz equation
evaluated at the boundary points and the points on $\Gamma_i$. And, so
are the columns of the matrix $\neQ$, since they equal linear
combinations of the columns of ${\tt C}_\mu$. Thus, letting ${\tt d}$
denote a singular vector of the matrix $ \neQ_A$ ($\parallel {\tt
  d}\parallel = 1$) corresponding to a singular value equal to zero, $
\neQ_A {\tt d} = 0$ (for which we must necessarily have $\parallel
\neQ_B {\tt d} \parallel = 1$), and letting ${\tt c} = \neR^{-1}{\tt
  d}$, the product ${\tt C}_\mu {\tt c} = \neQ {\tt d}$ equals a
linear combination of the columns of ${\tt C}_\mu$ which vanishes on
$\Gamma$ and for which, therefore, the mean square on $\Gamma_i$
equals one. From the discussion above in this section it follows that
${\tt c}$, which is a discrete version of the density $\psi$, yields,
via a discrete version of the representation~\eqref{ansatz}
(Section~\ref{Section:eigenfunctions}) an approximate eigenfunction of
the problem~\eqref{modelA}.

Thus, relying on the Singular Value Decomposition (SVD) of the matrix
$\neQ_A$ for a given value of $\mu$, and calling
$\widetilde{\eta}_n(\mu)$ the smallest of the corresponding singular
values, 
\begin{equation}
\label{eq:sigma}
\widetilde{\eta}_n(\mu) =  \min_{{\tt b}\in \R^{n}, \parallel{\tt b}\parallel=1} \parallel\neQ_A(\mu) {\tt b} \parallel =  \parallel\neQ_A(\mu) {\tt d} \parallel,
\end{equation}
the proposed eigensolver is based on finding values of $\mu$ for which
$\widetilde{\eta}_n(\mu)$ is equal to zero.  A pseudocode for this
method is presented in Algorithm~\ref{halmos}.

\begin{algorithm}[H]
  \caption{Numerical evaluation of all $\mu\in [F_{min},F_{max}]$ for
    which~\eqref{modelC} is satisfied. }\label{halmos}

 \begin{algorithmic}[1]
 
   \State Read input parameters $h$
   (cf. Remark~\ref{remark:select_nq}) and $N_0$ (number of singular
   values in the wave number search range $ [F_{min},F_{max}]$
   actually to be produced via SVD).

   \For{$j=1 : N_0$} \State Set $\mu :=F_{min}+ j
   \frac{F_{max}-F_{min}}{N_0}$.  \State Construct the matrix of the
   discrete operator ${\tt A}_{\mu}$
   (Algorithms~\ref{matrix_construction_smooth}
   and~\ref{matrix_construction_corner}).  \State Construct the matrix
   of the discrete operator ${\tt B}_{\mu}$ (eq.~\eqref{eq:b_mu}).
   \State Compute the QR-factorization of the augmented system ${\tt
     C}_\mu$ (eq.~\eqref{rectangular}).  \State Compute the minimal
   singular value $\sigma_n(\mu_j)$ of $\neQ_A$
   (cf. eq.~\eqref{eq:q_a}).
\EndFor     
\State Utilizing the computed values of $\sigma_n(\mu_j)$ execute the root-finding algorithm mentioned in Section~\ref{flipping} to produce approximate roots of the function $\sigma(\mu)$.
 \end{algorithmic} \end{algorithm}

\begin{remark}\label{tref_betcke}
  As mentioned in the introduction, the method of particular solutions
  (MPS) relies on use of Fourier-Bessel series that match homogeneous
  Dirichlet boundary conditions to produce Laplace eigenvalues and
  eigenfunctions. A modified version of the MPS, which was introduced
  in reference~\cite{trefethen2005}, alleviates some difficulties that
  occur in the original version of the method by enforcing that, as is
  necessary in our case as well, the proposed eigenfunctions do not
  vanish (and, indeed, are normalized to unity) in some finite set of
  points in the interior of the domain. In fact, the $QR$-based
  normalization method we use is similar to that introduced
  in~\cite{trefethen2005}. The difficulties underlying eigenvalue
  search in the present integral-equation context are different from
  those found in the approach~\cite{trefethen2005}, however. Indeed,
  as discussed in Section~\ref{Section:simple scan}, in the former
  case a phenomenon related to the Riemann-Lebesgue lemma is at work:
  highly oscillatory integrands of unit norm can yield small
  integrals. In the latter case, in contrast, the root cause lies in
  the fact that linear combinations of a number $n$ of Bessel
  functions with coefficients of unit norm (say, in the mean square
  sense) can be selected which tend to zero as $n$ grows. (Notice
  that, in view of the $z\to 0$ asymptotics
  $J_n(z)\sim\mathcal{O}(z^n)$, this fact bears connections with a
  well known result concerning polynomial interpolation: linear
  combinations of $n$ monomials can be made to tend to zero rapidly as
  $n$ grows---for example, the monic Chebyshev polynomial of order $n$
  tends to zero exponentially fast as $n\to\infty$.)
\end{remark}

\subsection{Sign changing procedure for the minimum singular value. \label{flipping}}
Consideration of Figure~\ref{fig:comparison} clearly suggests that the function $\widetilde{\eta}_n$ is a continuous but non-smooth function of $\mu$. As is known~\cite{wright1992differential,trefethen2005}, however, a  sign-changing methodology suffices to produce singular values as smooth (indeed analytic) functions of $\mu$---so that high-order interpolation and root finding becomes possible. With reference to  Algorithm~\ref{halmos}, using approximate values of $\sigma_n(\mu_j)$ at points on the uniform mesh $\mu_j$, our algorithm relies on calculation of signed singular values and subsequent polynomial interpolation to approximate the zeros of the function $\widetilde{\eta}_n(\mu)$. The overall sign-changing/interpolation root-finding algorithm we use is essentially identical to that presented in~\cite[p. 488]{trefethen2005}. To obtain the approximate roots with prescribed error tolerance, further, we implement this procedure using nested uniform meshes around each approximate root found. 
\begin{remark}\label{remark:search_method}
The recent contribution~\cite{zhao2014robust} uses the Fredholm determinant to obtain an smooth function of $\mu$ that vanishes whenever $\mu$ corresponds to an eigenvalue, and it compares the efficiency of that solver to one based on consideration of singular values as a function of $\mu$ for which the singular values are merely piecewise smooth functions. The sign changing procedure described in this section, however, gives rise to smooth (analytic) dependence of the singular values as functions of $\mu$, and thereby eliminates the potential difficulties suggested in~\cite{zhao2014robust}. 
\end{remark}

\section{Eigenfunction evaluation \label{Section:eigenfunctions}}
After the eigenvalues are obtained, the corresponding eigenfunctions can be evaluated for both FC-based and graded-mesh solvers using the representation~\eqref{ansatz}. High-order approximate evaluation of this integral for points $x$ sufficiently far from the boundary $\Gamma$, for which the corresponding integral kernels are smooth, is performed using the quadrature~\eqref{eq:before_fc_repeated} in the case of a FC-based algorithm, and the combination of graded-mesh change of variables and a trapezoidal rule~\eqref{eq:trapezoidal_rule} in the case of a graded-mesh algorithm. Methods for the evaluation of the eigenfunction in case the point $x$ is on the boundary $\Gamma$ (more precisely, on the Neumann boundary portion $\Gamma_N$, since the eigenfunctions admit zero values on the Dirichlet portion $\Gamma_D$) have already been described in detail in sections~\ref{bound_dec}--\ref{Section:discretization_matrix}. Lastly, in case the point $x$ is close to the boundary $\Gamma$ the corresponding kernels exhibit a near-singularity. In this case an interpolation approach is used to evaluate the eigenfunction $u(x)$ at a point $x$: letting $x_0$ denote the point in $\Gamma$ that is closest to $x$ and letting $L$ denote a straight segment passing through the points $x$ and $x_0$,  the values of $u$ at a small set of points $x_j\in L$ ($j=0,1\dots$) that, except for $x_0$, are sufficiently far from the boundary $\Gamma$ are used to produce the value $u(x)$ by means of an interpolating polynomial. (Typically cubic or quartic polynomials were used to produce the images presented in this paper.) To reach a prescribed tolerance it may be necessary to use increasingly fine meshes $\{x_j\}$ for which some or all elements may be closer to $\Gamma$ than is required for accurate integration by means of the available boundary mesh. In such cases the Chebyshev boundary expansions that produce the solution can be oversampled (by means of zero padding of the corresponding cosine expansion) to a mesh that is sufficiently fine to produce sufficiently accurate integrals at each one of the points $x_j$---and the interpolation procedure then proceeds as indicated above.

\section{Numerical results \label{sec:numerical_results}}
This section presents results of numerical experiments which
demonstrate the accuracy, efficiency and high-order character of the
proposed eigensolver. In preparation for this discussion we note that
there are only a few Zaremba eigenproblems whose spectrum is known in
closed form: even for geometries such as a disc, which are separable
for both the pure Dirichlet or Neumann eigenproblem, no Zaremba
spectra for nontrivial selections of $\Gamma_D$ and $\Gamma_N$ have
been evaluated explicitly. We thus first demonstrate the performance
of our algorithms for a Zaremba problem for which the spectrum is
available: an isosceles right triangle. We next compute the first few
eigenvalues of the Zaremba problem in smooth domains (both convex and
non-convex) for which no spectra have previously been put
forth---either in closed form or otherwise. As reference solutions for
such problems we use results of computations we produced on the basis
of well-established and validated finite element
codes~\cite{hecht2007freefempp}. We emphasize that no attempt was made
to optimize these finite element computations beyond the use of mesh
adaption near singular points. In addition, for certain polygonal
domains with obtuse angles we compare our results with existing
validated numerical simulations~\cite{liu}, and we then demonstrate
the behavior of our algorithm in a number of challenging problems.

In all, our examples include:
\begin{enumerate}[1)]

\item A problem on a convex polygonal domain (with a Dirichlet-Neumann junction occurring at a vertex with angles of less than $\displaystyle \frac{\pi}{2}$;  (Section~\ref{sec:experiment2}).
\item An application of the FC-based solver to smooth, convex domain; (Section~\ref{sec:experiment3}).
\item An application of the FC-based solver to smooth non-convex domain; (Section~\ref{sec:experiment4}).
\item  A problem  on a polygonal domain with the Dirichlet-Neumann junction occurring at angle greater than $\displaystyle \frac{\pi}{2}$. In this case, we set up the experiments to compare with  corresponding theoretically-identical Laplace-Dirichlet eigenvalues of a symmetry-related domain; (Section~\ref{sec:experiment5}).
\item A comparison of the performance of the FC-based and graded-mesh algorithms described in Sections~\ref{sec:smooth_geometries} and~\ref{sec:corner_geometries} when both are applied to a smooth domain. (The superior performance of the FC-based algorithm dramatically improves the overall performance of the eigensolver); (Section~\ref{sec:experiment1}).
\item Applications concerning high-frequency eigenvalue problems (evaluating thousands of eigenvalues and eigenfunctions, and showing, in particular, that the proposed eigensolver can successfully capture the asymptotic distribution of Zaremba eigenvalues);  (Section~\ref{sec:experiment6}).
\item Generalization to multiply connected domains; (Section~\ref{sec:experiment7}).
\item Applications to pure Dirichlet and pure Neumann eigenvalue problems; (Section~\ref{sec:experiment8}).
\end{enumerate}

The numerical results presented in this paper were obtained on a single core of a 2.4 GHz Intel E5-2665 processor. All of the listed digits for eigenvalues produced by the various FC-based and graded-mesh eigensolvers are significant (with the last digit rounded to the nearest decimal), while in the values produced by the FEM methods a number of digits additional to the correct ones are presented (to avoid rounding the first or second decimal).

\subsection{Convex polygonal domains \label{sec:experiment2}}
In this section the performance of the graded-mesh algorithm on simple
polygonal domains is analyzed. Such domains provide instances of
geometries where true eigenvalues of problem~\eqref{modelA} can be
computed analytically using reflection techniques. In detail, we
consider the Zaremba problem on the isosceles triangle with corners
(0,0), (0,1) and (1,0). Neumann data is prescribed along one side of
unit length, and Dirichlet data is prescribed along the other two
sides. For this geometry the eigenvalues of~\eqref{modelA} are a
subset of the set of Neumann eigenvalues of a square with corners
$(-1,0), (1,0), (1,2)$ and $(-1,2)$.  More specifically, if we pick
Neumann eigenfunctions of the square which have the correct
symmetries, the corresponding eigenvalues are the same as those of the
Zaremba problem on the described triangle. We can explicitly compute
these eigenvalues on the square to be
\begin{equation}\label{eig_expr}
  \lambda_{k,\ell}=\displaystyle \frac{(2k+1)^2+(2\ell+1)^2}{4} \pi^2, \quad k,\ell=0,1,2,3...
\end{equation} 
The comparison of the approximate eigenvalues computed on the basis of
$328$ and $1200$-point boundary meshes and the exact eigenvalues is shown in
Table~\ref{table:triangle}; corresponding eigenfunctions are depicted
in Figure \ref{fig:triangle}.

\begin{table}[H]
  \centering
\begin{tabular}{c|c|r |r|r } 
	$k$ &	$\ell$ 	& $\lambda_{k,\ell}$ exact	 & $\lambda_{k,\ell}$  ($n=328$) & $\lambda_{k,\ell}$  ($n=1200$)	\\ \hline
	0 & 1  & 24.6740110027234 & 24.67401100  & 24.6740110027234 \\
	0 & 2  & 64.1524286070808  & 64.15242861 & 64.1524286070809	\\
	1  & 2  & 83.8916374092595 & 83.89163742 & 83.8916374092596	\\
	0 & 3 	& 123.3700550136170 & 123.3700550 & 123.3700550136170	\\ 
	1 & 3  & 143.1092638157957 & 143.1092638 & 143.1092638157957	\\
	2 & 3  & 182.5876814201531 & 182.5876814 & 182.5876814201531	\\
	0 & 4  & 202.3268902223318 & 202.3268902 & 202.3268902223319	\\
\end{tabular}
\caption{Eigenvalues $\lambda_{k,\ell}$ for the isosceles triangle considered in Section~\ref{sec:experiment2} produced by the proposed graded-mesh eigensolver with $n= 328$ and $n= 1200$ compared to results produced by the closed form  expression~\eqref{eig_expr}.}
\label{table:triangle} 
\end{table}

\begin{figure}
\centering
  \includegraphics[width=6in]{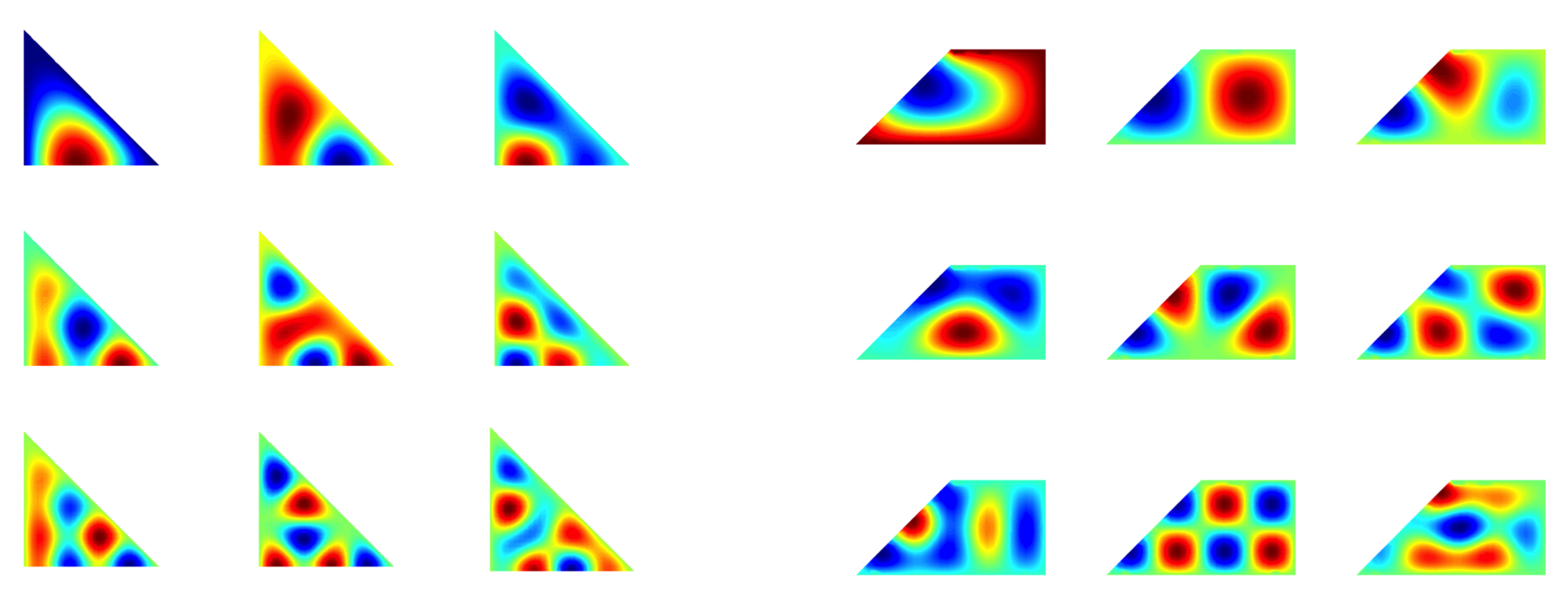}
 \caption{Left: Zaremba eigenfunctions for the triangular-domain eigenproblem considered in Section~\ref{sec:experiment2}. Right: Zaremba eigenfunctions for the trapezoid-shaped-domain eigenproblem considered in Section~\ref{sec:experiment5}.}
  \label{fig:triangle}
\end{figure}

\subsection{Convex smooth domains \label{sec:experiment3}}
In the case of smooth domains the Dirichlet-Neumann junction takes place at a vertex with interior angle equal to $\pi$, and, thus, the corresponding eigenfunction of~\eqref{modelA} is continuous but not twice continuously differentiable up to the boundary; see e.g.~\cite{wendland1979}. This fact gives rise to challenges for volumetric strategies; in particular, high-order conforming elements do not yield high-order accuracy for this problem. Our proposed boundary integral strategy coupled with the high-accuracy FC discretization of integral operators, in turn, efficiently provides high-order convergence and highly-accurate results.

These facts are illustrated in Tables~\ref{table:FEM} and~\ref{table:FC_method}, which present the first Zaremba eigenvalue for the unit disc as produced by the P1 and P2 FEM algorithm~\cite{hecht2007freefempp} and the FC-based eigensolvers. Clearly  the convergence resulting from the FEM methods is slow: Table~\ref{table:FEM} shows that, even using a mesh containing over 10,000-triangles, the FEM methods produce results with no more than 2 digits of accuracy. The FC results displayed in Table \ref{table:FC_method}, in turn, demonstrate that the FC solver produces eigenvalues with 10 digits of accuracy using a discretization containing a mere 512 mesh points.

\begin{table}[H]
\centering
\begin{tabular}{c|c|c } 
$N_t$ &	P1	& 	P2 	\\ \hline
636&1.59&1.56\\
2538&1.57&1.56\\
10120&1.56&1.55\\
39962&1.55&1.55\\
\end{tabular}
\caption{Convergence of first Zaremba eigenvalue on the disc.  P1 and P2  FEM approaches. Here $N_t$ is the number of triangles in the mesh.}
\label{table:FEM} 
\end{table}
 \begin{table}[H]
\centering
\begin{tabular}{ l |c |  c |c | r  }
$n$ & 64 & 128 & 256 & 512\\ \hline
$\lambda_1$ & 1.548549 & 1.54854933 & 1.5485493331 & 1.548549333189 \\
\end{tabular}
\caption{Convergence of the FC-based eigensolver:  first Dirichlet-Neumann Laplace eigenvalue in the unit disc.}
\label{table:FC_method} 
\end{table}

For reference, Table~\ref{Table:discmany} presents corresponding results produced by P1, P2 and P1 Non-Conforming (Crouziex-Raviart) FEM methods as well as the proposed FC-algorithm for the first 10 Zaremba eigenvalues on the disc. Once again the convergence of the FEM algorithms is slow, and, using tens of thousands of unknowns, yield no more than 3 digits of accuracy. The corresponding 10 eigenvalues produced by the FC method, in turn, do contain at least a full 13 digits of accuracy.

\begin{table}[h!]
\centering
\begin{tabular}{c|c|c|c }
P1&		P1 NC		&P2&  FC eigensolver\\ \hline
1.55	&	1.54	&	1.55	&	1.548549333189\\
6.68	&	6.64	&	6.68	&	6.668097160848\\
8.66	&	8.66	&	8.66	&	8.662779904509\\
14.82	&	14.74	&	14.80	&	14.782583814100\\
17.86	&	17.83	&	17.85	&	17.848357621645\\
21.21	&	21.20	&	21.20	&	21.204559421807\\
25.83	&	25.73	&	25.81	&	25.781212572974\\
29.65	&	29.56	&	29.63	&	29.605375911651\\
35.93	&	35.90	&	35.92	&	35.914231714109\\
37.80	&	37.74	&	37.78	&	37.767236907914\\
\end{tabular}
\caption{The first 10 Zaremba eigenvalues on the unit disc. The P1 conforming and P1 non-conforming computations are on a mesh of 40144 triangles (3 digit accuracy). The P2 conforming FEM computations are on 10136 triangles (3 digit accuracy). In contrast, 512 points suffice for the FC eigensolver (Section~\ref{sec:smooth_geometries}) to produce the eigenvalues with an accuracy of 13 digits).}
\label{Table:discmany}
\end{table}

\subsection{Smooth, non-convex domains \label{sec:experiment4}}

In this experiment we consider a non-convex domain with smooth boundary parametrized by 
\begin{eqnarray} x_1=\cos(t)+0.65 \cos(2t)-0.65\quad\mbox{ and }\quad x_2=1.5 \sin(t), \end{eqnarray}
with the Neumann and Dirichlet boundary portion $\Gamma_N$ and $\Gamma_D$ corresponding to  $ t \in [\pi/2; 3\pi/2]$ and its complement, respectively. No exact solution for this problem is available. 
We compare the performance of our FC-based algorithm with the performance of three finite element methods: a P1 conforming method, a P1 non-conforming (Crouziex-Raviart) method, and a P2 conforming method. The convergence of the finite element methods is once again slow: we present  FEM results with three significant digits which require tens of thousands of unknowns. In contrast, the FC-based eigensolver yields 13 digits of accuracy on the basis of a mere 512 points boundary discretization.  These results are detailed in Table \ref{Table:kite}.
\begin{table}[h!]
\centering
\begin{tabular}{ c|c|c|c }\\
Crouzeix-Raviart & P1 conforming&P2 conforming& FC-based eigensolver \\ \hline
2.49	&	2.49	&	2.49	&	2.494957693616\\
6.24	&	6.26	&	6.25	&	6.253748349225\\
8.03	&	8.04	&	8.04	&	8.042440637044\\
12.03	&	12.08	&	12.06	&	12.053365383455\\
13.38	&	13.43	&	13.42	&	13.406406452033\\
18.04	&	18.06	&	18.05	&	18.047848229702\\
19.08	&	19.20	&	19.17	&	19.137393493839\\
\end{tabular}
\caption{Numerical experiments for the kite-shaped domain. The  P1 (both conforming and non-conforming) FEM methods  use 40144-triangle meshes, whereas the P2 method uses a 5156-triangle mesh. The FC-based eigensolver uses 512 boundary points.}
\label{Table:kite}
\end{table}
\begin{figure}[h!]
\centering
  \includegraphics[width=4in]{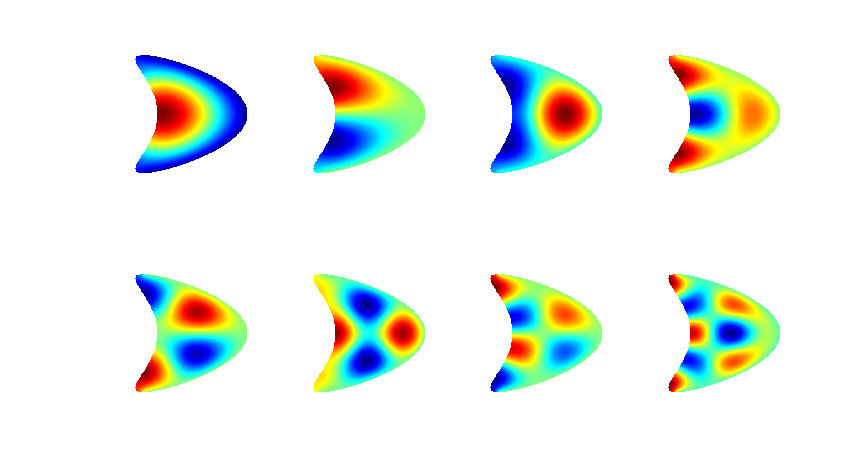}
 \caption{Zaremba eigenfunctions for the kite-shaped-domain eigenproblem described in Section~\ref{sec:experiment4} }
  \label{fig:KITE}
\end{figure}

In Table~\ref{Table:CompTimesFC} we list the computational times for the FC-based solver to compute first 12 eigenvalues for the geometries considered in Sections~\ref{sec:experiment2} and ~\ref{sec:experiment4}. All times are given on a per-eigenvalue basis.

\begin{table}[h!]
\centering
\begin{tabular}{c | c |  c }\\
Domain	&	Time (err. $10^{-5}$) 	&	Time (err. $10^{-10}$) \\ \hline
Disc	&	0.09 s	&	1.13 s  \\
Kite	&	0.18 s	&	1.88 s	 \\
\end{tabular}
\caption{FC-based eigensolver. Computational times per-eigenvalue for the first 12 eigenvalues }
\label{Table:CompTimesFC}
\end{table}

\subsection{Polygonal domains with obtuse Dirichlet-Neumann junctions \label{sec:experiment5}}

The L-shaped domain provides an important test case. In reference~\cite{liu} a set of validated numerical experiments is presented for the Dirichlet eigenvalue problem on an L-shaped domain (a square of side length two with a unit square removed). These numerical results were produced by means of finite element discretizations. For the first Dirichlet eigenvalue, a {\it provable} interval $[9.5585,9.6699]$ which brackets the true eigenvalue is provided.

\begin{table}[h!]
\centering
\begin{tabular}{c | c | c | c }\\
$i$	&	Lower bound	&	Upper bound	&	$\lambda_i$ (graded-mesh eigensolver) \\ \hline
1	&	9.55	&	9.66	&	9.639723844021955 \\
3	&	19.32	&	19.78	&	19.739208802178748 \\
5	&	30.86	&	32.05	&	31.912635957137709 \\
\end{tabular}
\caption{Eigenvalues corresponding to the symmetric eigenfunctions for the L-shaped domain. Comparison with table 5.5 in \cite{liu}. Eigenvalues produced by means of the graded-mesh eigensolver are computed with at least 13 digits of accuracy (by convergence analysis).}
\label{Table:Lshape}
\end{table}

Using symmetry arguments it can be easily seen that some of the Zaremba eigenvalues for the trapezoid that results by cutting the L-shaped domain along a symmetry line coincide with certain Dirichlet eigenvalues on the L-shaped domain.  The graded-mesh algorithm introduced in this paper produces the approximation $9.639723844021955$ for the first eigenvalue---clearly within the guaranteed interval---and several other eigenvalues are computed without difficulty, see Figure~\ref{fig:triangle}. Table~\ref{Table:Lshape} displays the eigenvalue bounds resulting from use of the FEM algorithm from reference~\cite[Table 5.5]{liu} as well as those produced by means of the graded-mesh algorithm presented in this paper. Figure~\ref{fig:triangle} (right) presents depictions of several Zaremba eigenfunctions on the trapezoid mentioned above.
In Table~\ref{Table:CompTimes} we list the computational times for the graded-mesh solver to compute first 12 eigenvalues for the geometries considered in Sections~\ref{sec:experiment3} and~\ref{sec:experiment5}. All times are given on a per-eigenvalue basis. 

\begin{table}[h!]
\centering
\begin{tabular}{c | c |  c }\\
Domain	&	Time (err. $10^{-5}$) 	&	Time (err. $10^{-10}$) \\ \hline
Triangle	&	0.07 s 	&	1.19 s	\\
Trapezoid 	&	0.18 s 	&	1.89 s \\
\end{tabular}
\caption{Graded-mesh eigensolver. Computational times per-eigenvalue for the first 12 eigenvalues.}
\label{Table:CompTimes}
\end{table}

\subsection{Comparison of FC-based and graded-mesh approaches \label{sec:experiment1}}

Sections \ref{sec:smooth_geometries} and \ref{sec:corner_geometries}   describe FC-based and graded-mesh eigensolvers for high-order evaluation of Zaremba eigenvalues on smooth and Lipschitz geometries, respectively. As indicated in Remark~\ref{rem_FC_graded}, however, the graded-mesh algorithm can also be applied  to smooth geometries.  Figure~\ref{fig:fc_vs_ck} compares the convergence history for both of these algorithms as they are used to obtain the Zaremba eigenvalue $\lambda_{18} = 73.1661817902$ for the unit disc (where Dirichlet and Neumann boundary conditions are prescribed on the upper and lower halves of the disc boundary). This figure demonstrates a general fact: for smooth geometries the FC-based approach significantly outperforms the (more generally applicable) graded-mesh algorithm. The somewhat slower convergence of the graded-mesh solver relates, in part, to the relatively large value $\alpha=1$ associated with the $180^\circ$ angle that occurs at Dirichlet-Neumann junctions on smooth curves; cf. Remark~\ref{remark:cancellations_corner}.

\begin{figure}[h!]
\centering
\includegraphics[width=5.5in]{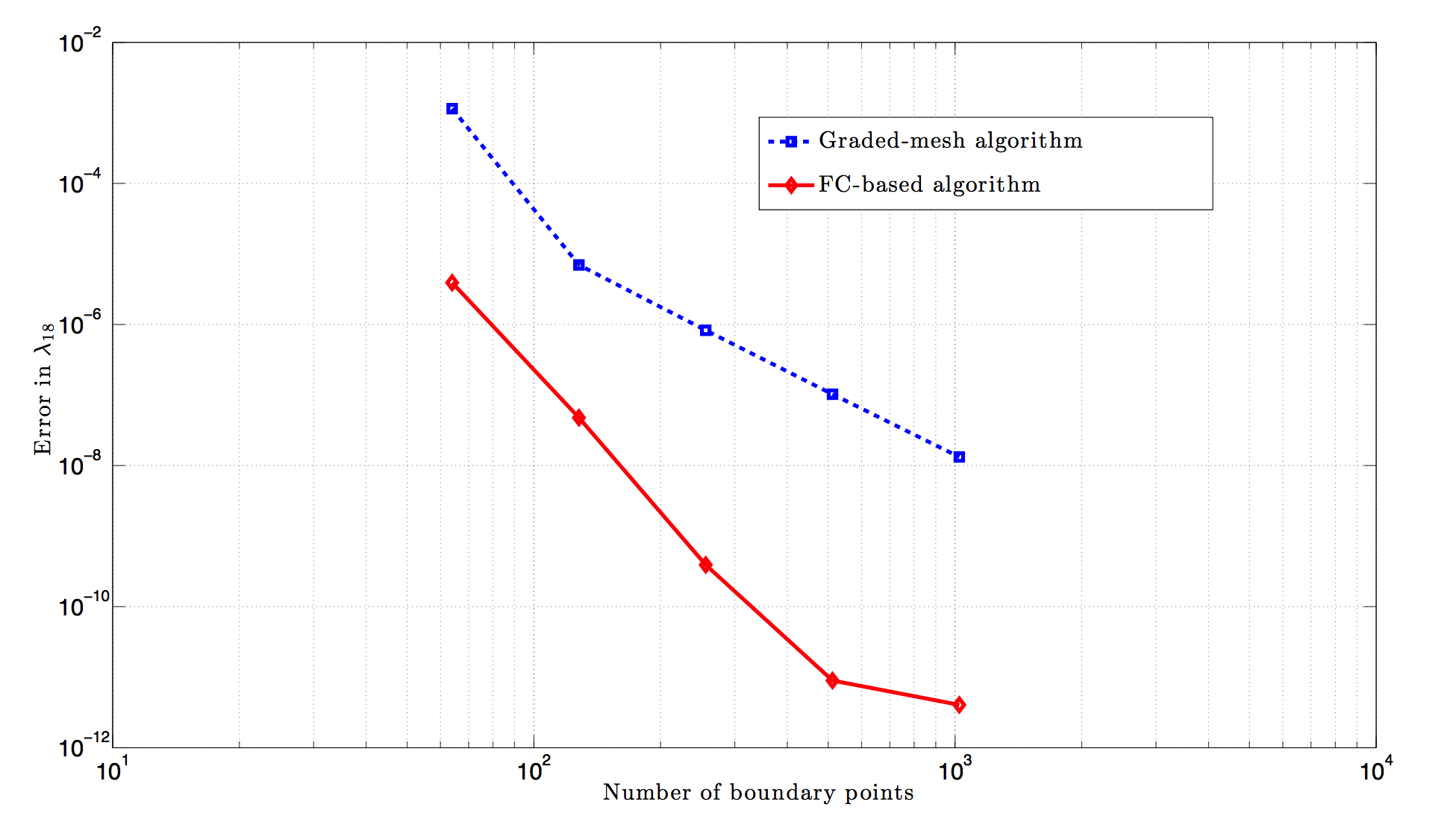}
 \caption{Sample convergence history resulting from the FC-based eigensolver and the graded-mesh eigensolver for the eighteenth Zaremba eigenvalue $\lambda_{18}$ discussed in Section~\ref{sec:experiment5}. The computational times required for evaluation each one of the eigenvalue approximations by means of the FC solver and the graded-mesh solver are as follows. FC-solver times: 0.23s, 0.67s, 2.82s, 19.19s, 119.7s. Graded-mesh solver: 0.09s, 0.60s, 1.93s, 16.11s, 112.90s. We note, for example,  that an error of $10^{-7}$ results from the FC-solver in this case in a computational time of 0.67 seconds; for the same accuracy, the computing time required by the graded mesh solver is 16.11 seconds.}
  \label{fig:fc_vs_ck}
\end{figure}

\subsection{High-frequency wave numbers \label{sec:experiment6}}

\begin{figure}[h!]
\centering
\includegraphics[width=5in]{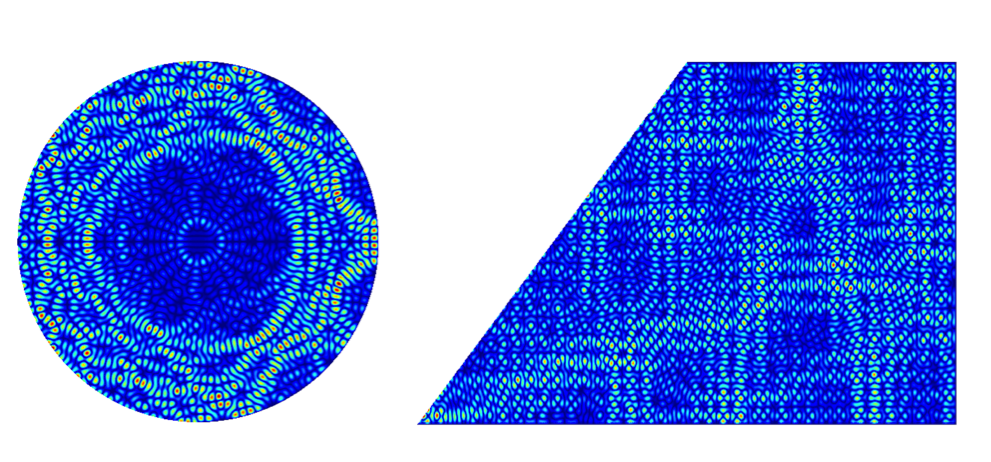}
 \caption{High frequency eigenfunctions mentioned in Section~\ref{sec:experiment6}}
  \label{fig:HF_PLOT}
\end{figure}

The high-order convergence of the algorithms presented in this paper
enables evaluation of eigenvalues and eigenfunctions in very wide
frequency ranges.  For example, we have used our solver to produce the
first 3668 eigenvalues and eigenfunctions for the eigenproblem
mentioned in Section~\ref{sec:experiment5} with a full 13 digits of
accuracy (the eigenvalues are depicted in Figure~\ref{fig:weyl}
(left)). The single-core computational time required for evaluation of
the first 9 eigenvalues the was 17 seconds, while for the last 9
eigenvalues (that correspond to higher values of $\lambda$, and,
therefore, finer discretization meshes required for a given accuracy)
the computational times was 189 minutes.  In another example, Figure
\ref{fig:HF_PLOT} shows an eigenfunction for a unit disc corresponding
to the eigenvalue $\lambda=10005.97294969$ (left) and an eigenfunction
for a trapezoid (that also corresponds to symmetric Laplace-Dirichlet
eigenfunction for L-shaped domain (cf.  Section~\ref{sec:experiment5})
corresponding to the eigenvalue $\lambda=40013.2312203$ (right).


Our next experiment concerns the number $N(x)$ of Dirichlet-Neumann eigenvalues $\lambda$ satisfying $0 < \lambda \leq x$. For pure Dirichlet eigenvalues $N(x)$ satisfies the Weyl asymptotics~\cite{weyl1912asymptotische,weyl1911asymptotische} 
\begin{equation}\label{weyl_asymptotics}
\lim_{x \to \infty} \frac{N(x)}{x} = (2 \pi)^{-d} \omega_d A(\Omega),
\end{equation}
(see also~\cite[p. 322-323]{strauss2008partial}), where  $\omega_d$ is the volume of the unit ball in $d$ dimensions ($\omega_d = \pi$ for $d=2$) and where $A(\Omega)$ is the volume of the domain $\Omega\subset \mathbb{R}^d$. Figure~\ref{fig:weyl} depicts the ratio $\displaystyle \frac{N(x)}{x}$ for the Dirichlet-Neumann eigenvalues for the geometries considered earlier in this Section: triangle (Section ~\ref{sec:experiment2}), unit disc (Section~\ref{sec:experiment3}), kite (Section~\ref{sec:experiment4}) and trapezoid (Section~\ref{sec:experiment5}). These results suggest that a similar limit may exist for the Zaremba eigenvalue problem.

\begin{figure}[h!]
\centering
\includegraphics[width=6.5in]{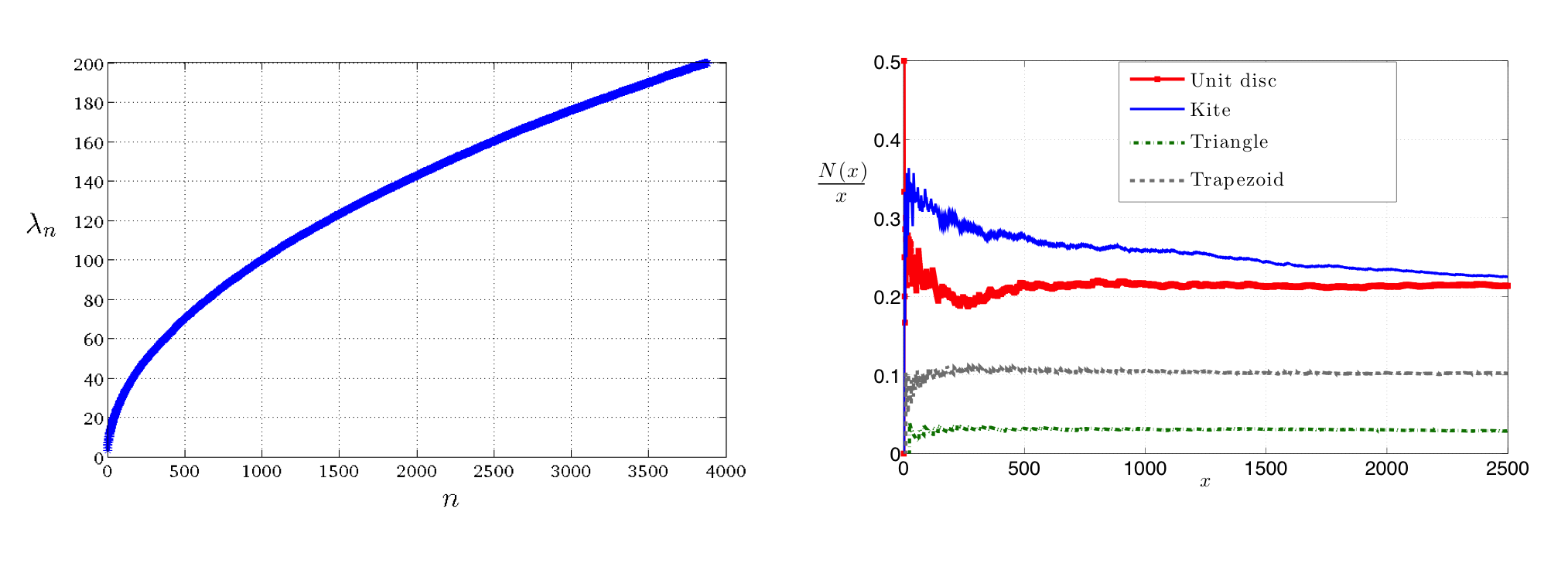}
  \caption{Left: First 3668 eigenvalues for the trapezoidal domain eigenproblem considered in Section~\ref{sec:experiment5}. Right: Ratio $\frac{N(x)}{x}$  (cf. eq.~\eqref{weyl_asymptotics}) for various domains.}
  \label{fig:weyl}
\end{figure}

\subsection{Multiply connected domains \label{sec:experiment7}}
This section presents results produced by a generalization of our eigensolvers that can be applied to multiply connected domains. As is well known, application of integral eigensolvers to multiply connected domains can give rise to spurious resonances~\cite{chen2001boundary,chen2003spurious}---which arise from eigenvalues of the domain interior to the inner boundary. We have found, however, that by enforcing an additional condition based on use of interior points in a manner related to that considered in Section~\ref{Section:EVP} (which ensures that the function $u$ in equation~\eqref{ansatz} vanishes in the bounded components of the complement of $\Omega$) a function $\widehat{\eta}_n(\mu)$ is obtained that is equal to zero only at the true eigenvalues of the multiply connected domain. The detailed description of the algorithm will be presented elsewhere; here we provide a preliminary numerical demonstration of the new methodology. The domain under consideration is a polygon determined by the set of exterior vertices $(0,0), (0,3), (2,4), (3,2), (3,0)$ and the set of interior vertices $(1,1), (1,2), (2,3)$ with Dirichlet boundary conditions on all sides. Figure~\ref{fig:house} depicts true and spurious eigenfunction for this domain corresponding to the eigenvalues $\lambda = 76.619031$ and $\lambda = 77.663162$, respectively. As can be seen in Figure~\ref{fig:multiply_connected}, the procedure effectively screens out the spurious eigenvalue $\lambda = 77.663162$.


\begin{figure}[h!]
\centering
\includegraphics[width=5in]{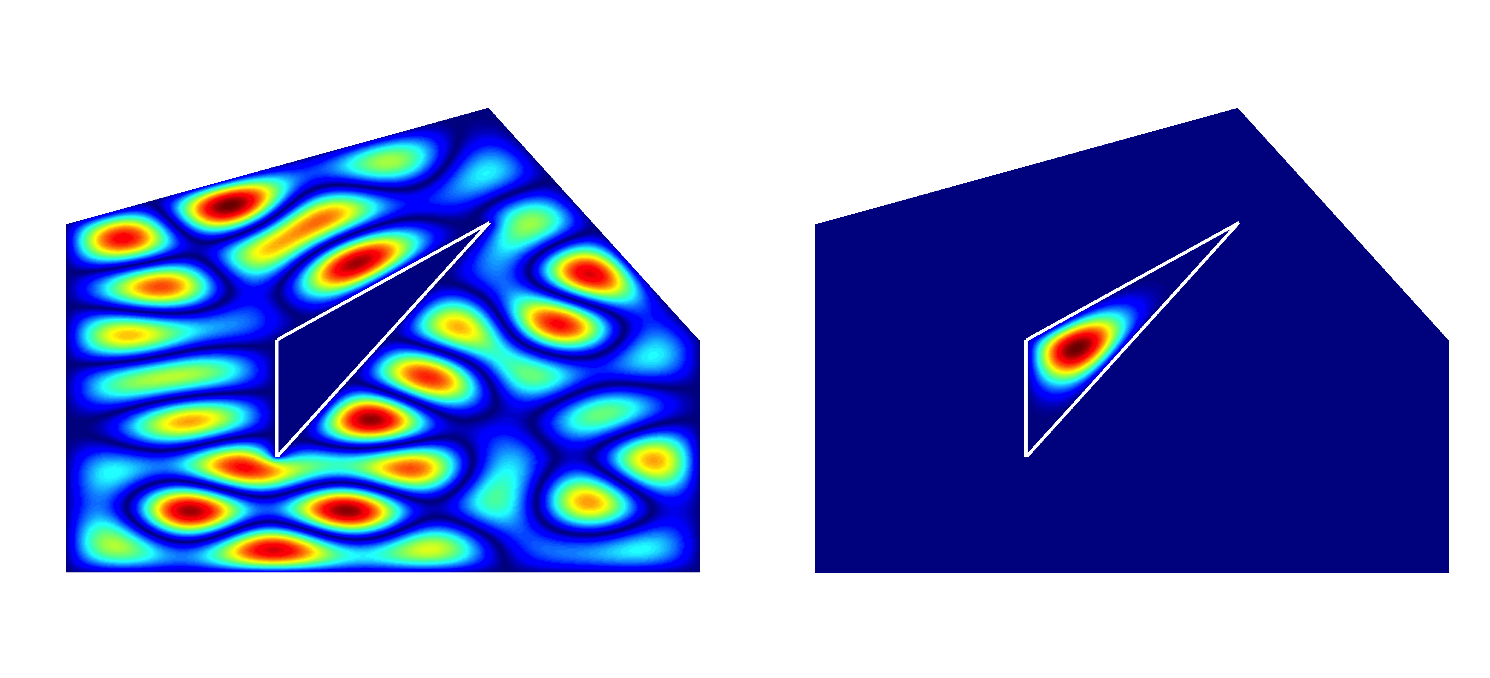}
  \caption{True and spurious eigenfunction for multiply connected domain. Left: true eigenfunction corresponding to $\lambda = 76.619031$. Right: spurious eigenfunction corresponding to $\lambda = 77.663162$ }
  \label{fig:house}
\end{figure}
\begin{figure}[h!]
\centering
\includegraphics[width=4.5in]{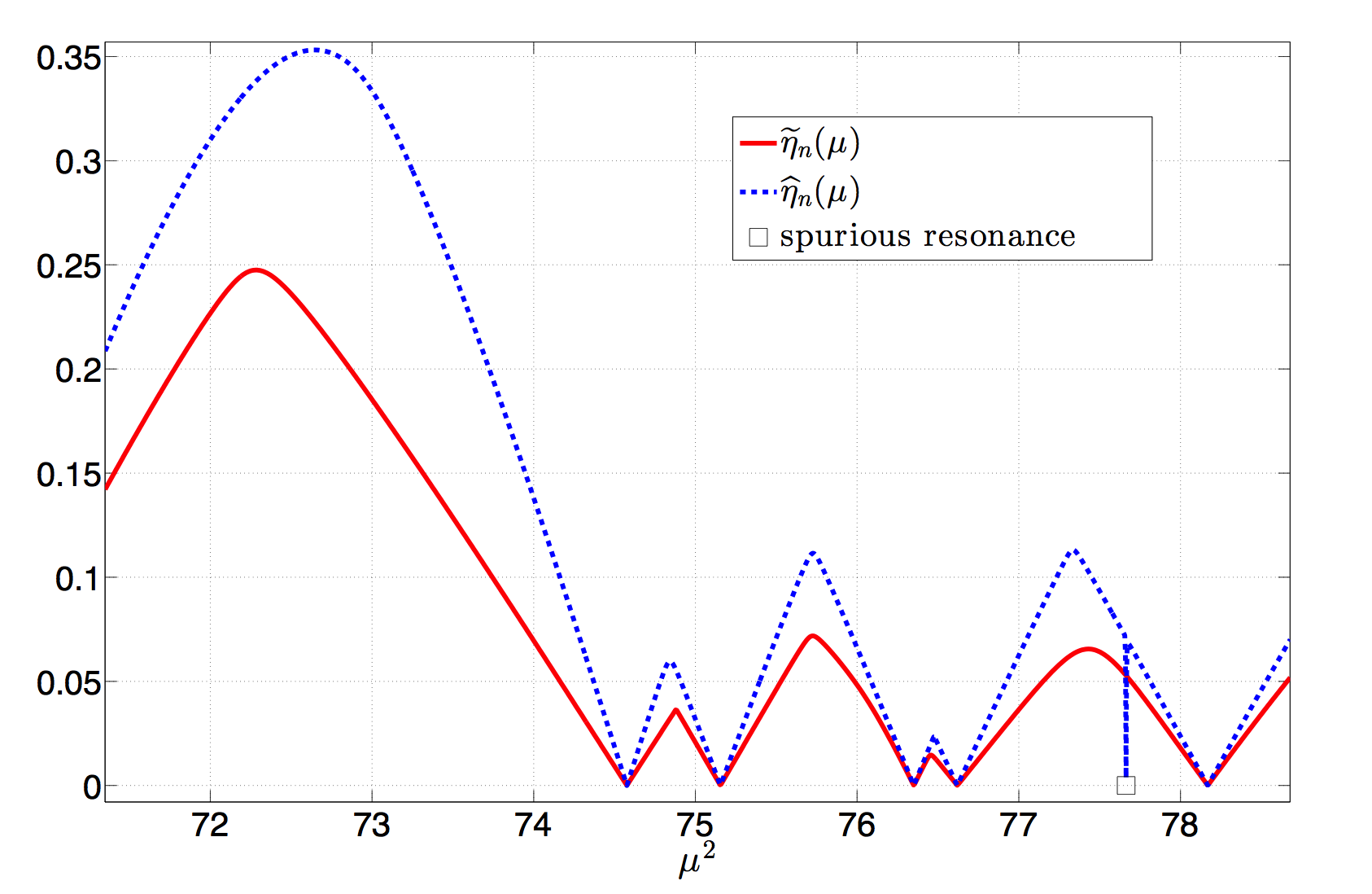}
  \caption{Filtering of spurious eigenvalue. Dashed curve: function $\widetilde{\eta}_n(\mu)$; solid curve: $\widehat{\eta}_n(\mu)$ }
  \label{fig:multiply_connected}
\end{figure}


\subsection{Pure Dirichlet and Pure Neumann eigenfunctions \label{sec:experiment8}}
As mentioned in the introduction, the methods described in this paper can be applied to a variety of eigenvalue problems (see Remark~\ref{first_kind} for a discussion in these regards). In particular, Laplace eigenfunctions for pure Dirichlet or pure Neumann boundary conditions can be computed using the proposed eigensolver: both problems can be treated as particular cases of the more general Zaremba problem (for which $\Gamma_D = \emptyset$ or $\Gamma_N = \emptyset$, respectively). Sample eigenfunctions produced by our methods under pure Dirichlet and pure Neumann boundary conditions are presented in Figure~\ref{fig:lshape_dirichlet}. 
\begin{figure}[h!]
\centering
\includegraphics[width=3in]{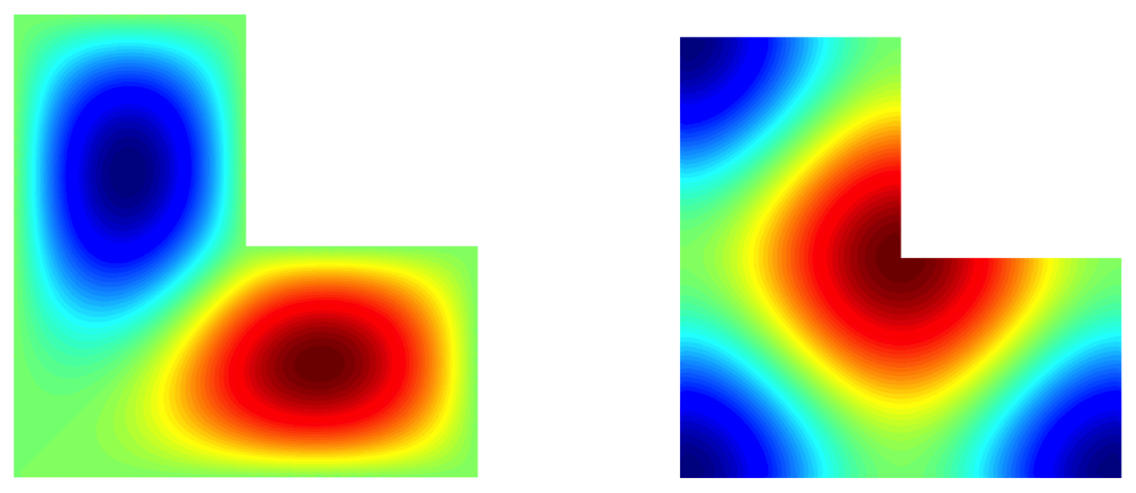}
\caption{Eigenfunctions for L-shaped domain with Dirichlet (left) and Neumann (right) boundary conditions}
  \label{fig:lshape_dirichlet}
\end{figure}

\section{Conclusions}
This paper introduces a novel integral-equation based strategy for solution of Zaremba eigenproblems. By precisely accounting for the singularities of the boundary densities and kernels, the relevant boundary integral operators are discretized with accuracies of very high order. Methods are presented for smooth domains (based on Fourier Continuation techniques) and for Lipschitz domains (based on use of graded meshes). A stabilization technique used as part of our zero-singular-value search algorithm yields a robust non-local eigenvalue-search method. The resulting solvers allow for highly accurate and efficient approximation of eigenvalues and eigenfunctions, even for cases that involve strongly singular eigenfunctions and/or very high frequencies.

\section*{Acknowledgments}
EA and OB gratefully acknowledge support from AFOSR and NSF. NN gratefully acknowledges support from NSERC and the Canada Research Chairs foundation.
\appendix
\section{Appendix: The Fourier Continuation method (FC)\label{sec:FC}}

Given $N$ point values $f(x_i)$ ($x_i=\frac{i \pi}{N-1}$,
$i=0,\dots,N-1$) of a smooth function $f(x)$ defined in the interval
$[0,\pi]$, the Fourier Continuation algorithm produces rapidly
convergent periodic approximations $f^c$ of $f$ to an interval of
length larger than $\pi$. In view of the closed-form
integrals~\eqref{symms_operator}-\eqref{eq:Am_formula} used in
Section~\ref{sec:high_order_quadratures}, which lie at the basis of
our FC-based quadrature method, in the context of the present paper
the needed periodicity length is $2\pi$---so that the Fourier
continuation of the function $f$ takes the form
\begin{equation}
\label{eq:FC_description}
 f^c(x)=\sum _{k=-F}^{F} a_k e^{i k x}
\end{equation}
for some value of $F$. (The form~\eqref{eq:FC_description} applies to
expansions with an odd number $2F+1 $ of terms, but obvious
alternative forms may be used to include expansions containing an even
number of terms.) In this paper we use the ``blending-to-zero"
version of the algorithm, which was introduced in~\cite{albin2011},
together with small additional adjustments to enable use of the long
continuation intervals required in the present paper. For additional
details, including convergence studies of FC approximations, we refer
to \cite{bruno2009_1,bruno2009_2,albin2011}.

The extended periodicity interval is used in the FC method to
eliminate discontinuities that arise in a period-$\pi$ extension of
the function $f$, and thus, to eliminate the difficulties arising from
the Gibbs phenomenon. The FC representation \eqref{eq:FC_description}
is based on use of a preliminary discrete extension of $f$ to the
interval $[\pi-L,L]$ ($L>\pi$) which contains $[0,\pi]$ in its
interior. This discrete extension is obtained by appending to the
original $N$ function values an additional $C > 0$ function values
that provide a smooth transition from $f_{N-1}$ to $0$ in the interval
$[\pi, L]$, as well as $C$ function values that provide a smooth
transition from $f_0$ to zero in the interval $[\pi-L,0]$. Here
$\displaystyle L=\pi (N+C)/(N-1)$ with $C$ small enough so that $L <
3\pi/2$.

To obtain the function values in the extension domains $[\pi-L,0]$ and
$[\pi,L]$ we use a certain FC(Gram) algorithm~\cite{bruno2009_1} which
is briefly described in what follows. The FC(Gram) method constructs,
at first, a polynomial approximant to $f$ in each one of the intervals
$[x_0,x_{d-1}]$ and $[x_{N-d},x_{N-1}]$ (for some small integer number
$d$ independent of $N$) on the basis of the given function values at
the discretization points $x_0,x_1,\dots,x_{d-1}$ and
$x_{N-d},x_{N-d+1},\dots,x_{N-1}$, respectively; see
Figure~\ref{fig:FC}. Following~\cite{bruno2009_1}, in this paper these
interpolants are obtained as projections onto a certain basis of
orthogonal polynomials: the Gram polynomial basis of order $m$. The
FC(Gram) algorithm then utilizes a precomputed smooth function for
each member of the Gram basis which smoothly blends the basis
polynomial to the zero function over the distance $L -\pi$;
see~\cite{bruno2009_1,bruno2009_2,albin2011} for details.

In view of the large continuation intervals required in this paper,
the function values on the interval $[\pi-L,L]$ produced as indicated
above are subsequently padded by an appropriate number of zero values
to produce values of a $2\pi$-periodic smooth function (see
Figure~\ref{fig:FC}).  The algorithm is completed via an application
of the Fast Fourier Transform (FFT) to the $2\pi$ periodic extended
discrete function---to produce the coefficients $a_k$ of the Fourier
continuation $f^c$ shown in \eqref{eq:FC_description}. Throughout this
paper we have used the parameter values $C=27$, $d=6$ and $m=5$.
\begin{figure}
  \centering
  \includegraphics[width=6in]{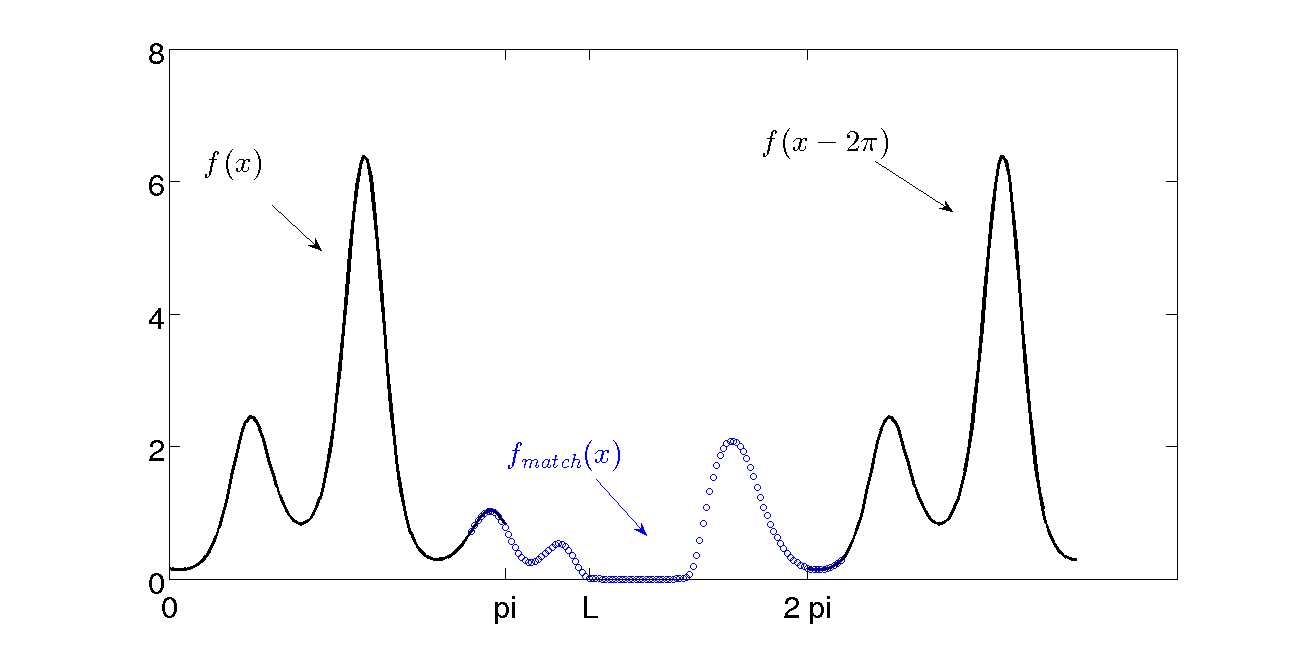}
  \caption{Demonstration of the blending-to-zero FC algorithm}
  \label{fig:FC}
\end{figure}

\bibliographystyle{plain}
\bibliography{Library}

\begin{thebibliography}{10}

\bibitem{AkhBruno2013}
E.~Akhmetgaliyev and O.~Bruno.
\newblock Integral equation solution of mixed boundary-value problems:
  singularity resolution via {Fourier} {Continuation}.
\newblock In preparation.

\bibitem{AkhBrunoReitich}
E.~Akhmetgaliyev, O.~Bruno, and F.~Reitich.
\newblock Integral equation solution of mixed boundary-value problems: domain
  smoothing and singularity resolution.
\newblock In preparation.

\bibitem{albin2011}
N.~Albin and O.~Bruno.
\newblock A spectral {FC} solver for the compressible {Navier-Stokes} equations
  in general domains {I}: {Explicit} time-stepping.
\newblock {\em Journal of Computational Physics}, 230:6248--6270, 2011.

\bibitem{bleszynski1996aim}
E~Bleszynski, M~Bleszynski, and T~Jaroszewicz.
\newblock Aim: Adaptive integral method for solving large-scale electromagnetic
  scattering and radiation problems.
\newblock {\em Radio Science}, 31(5):1225--1251, 1996.

\bibitem{borisov2010asymptotics}
D.~Borisov and P.~Freitas.
\newblock Asymptotics of dirichlet eigenvalues and eigenfunctions of the
  laplacian on thin domains in {$\mathbb{R}^d$}.
\newblock {\em Journal of Functional Analysis}, 258(3):893--912, 2010.

\bibitem{bruno2009_1}
O.~Bruno and M.~Lyon.
\newblock High-order unconditionally stable {FC-AD} solvers for general smooth
  domains {I. Basic elements}.
\newblock {\em Journal of Computational Physics}, 229:2009--2033, 2009.

\bibitem{bruno2007regularity}
Oscar~P. Bruno and Michael~C. Haslam.
\newblock Regularity theory and superalgebraic solvers for wire antenna
  problems.
\newblock {\em SIAM Journal on Scientific Computing}, 29(4):1375--1402, 2007.

\bibitem{bruno2001fast}
Oscar~P Bruno and Leonid~A Kunyansky.
\newblock A fast, high-order algorithm for the solution of surface scattering
  problems: basic implementation, tests, and applications.
\newblock {\em Journal of Computational Physics}, 169(1):80--110, 2001.

\bibitem{bruno2012second}
Oscar~P. Bruno and St{\'e}phane~K. Lintner.
\newblock Second-kind integral solvers for {TE} and {TM} problems of
  diffraction by open arcs.
\newblock {\em Radio Science}, 47(6), 2012.

\bibitem{bruno2013high}
Oscar~P. Bruno and St{\'e}phane~K. Lintner.
\newblock A high-order integral solver for scalar problems of diffraction by
  screens and apertures in three-dimensional space.
\newblock {\em Journal of Computational Physics}, 252:250--274, 2013.

\bibitem{bruno2009high}
Oscar~P. Bruno, Jeffrey~S. Ovall, and Catalin Turc.
\newblock A high-order integral algorithm for highly singular {PDE} solutions
  in {Lipschitz} domains.
\newblock {\em Computing}, 84(3-4):149--181, 2009.

\bibitem{CakoniColton2005}
F.~Cakoni and D.~Colton.
\newblock {\em Qualitative Methods in Inverse Scattering Theory}.
\newblock Springer, 2005.

\bibitem{chenmultiple}
J.~T. Chen, S.~Y. Lin, I.~L. Chen, and Y.~T. Lee.
\newblock Mathematical analysis and numerical study to free vibrations of
  annular plates using {BIEM} and {BEM}.
\newblock {\em Internat. J. Numer. Methods Engrg.}, 65(2):236--263, 2006.

\bibitem{chen2001boundary}
JT~Chen, JH~Lin, SR~Kuo, and SW~Chyuan.
\newblock Boundary element analysis for the helmholtz eigenvalue problems with
  a multiply connected domain.
\newblock {\em Proceedings of the Royal Society of London. Series A:
  Mathematical, Physical and Engineering Sciences}, 457(2014):2521--2546, 2001.

\bibitem{chen2003spurious}
JT~Chen, LW~Liu, and H-K Hong.
\newblock Spurious and true eigensolutions of helmholtz bies and bems for a
  multiply connected problem.
\newblock {\em Proceedings of the Royal Society of London. Series A:
  Mathematical, Physical and Engineering Sciences}, 459(2036):1891--1924, 2003.

\bibitem{colton1984}
D.~Colton and R.~Kress.
\newblock {\em Inverse Acoustic and Electromagnetic Scattering Theory}.
\newblock Springer, 1984.

\bibitem{COLTON:1983}
David~L. Colton and Rainer Kress.
\newblock {\em Integral Equation Methods in Scattering Theory}.
\newblock Pure and Applied Mathematics. John Wiley \& Sons Inc., New York,
  first edition, 1983.

\bibitem{COLTON:1998}
David~L. Colton and Rainer Kress.
\newblock {\em Inverse Acoustic and Electromagnetic Scattering Theory}.
\newblock Springer, 1998.

\bibitem{duran_nedelec}
Mario Dur{\'a}n, Marcela Miguez, and Jean-Claude N{\'e}d{\'e}lec.
\newblock Numerical stability in the calculation of eigenfrequencies using
  integral equations.
\newblock {\em Journal of computational and applied mathematics},
  130(1):323--336, 2001.

\bibitem{1997partial}
Michael E.Taylor.
\newblock {\em Partial Differential Equations: Qualitative studies of linear
  equations}.
\newblock Number v. 2 in Applied Mathematical Sciences. Springer, 1997.

\bibitem{Fox1967}
L.~Fox, P.~Henrici, and C.~Moler.
\newblock Approximations and bounds for eigenvalues of elliptic operators.
\newblock {\em SIAM Journal on Numerical Analysis}, 4:89--102, 1967.

\bibitem{hecht2007freefempp}
Fr{\'e}d{\'e}ric Hecht, Olivier Pironneau, A~Le~Hyaric, and K~Ohtsuka.
\newblock Freefem++.
\newblock {\em Laboratoire JL Lions, University of Paris VI, France}, 70, 2007.

\bibitem{kamiya}
N.~Kamiya, E.~Andoh, and K.~Nogae.
\newblock Eigenvalue analysis by the boundary element method: New developments.
\newblock {\em Engineering Analysis with Boundary Elements}, 12:151--162, 1993.

\bibitem{KRESS:1990}
R.~Kress.
\newblock A {N}ystrom method for boundary integral equations in domains with
  corners.
\newblock {\em Numer. Math.}, 58:145--161, 1990.

\bibitem{kuo2000applications}
SR~Kuo, W~Yeih, and YC~Wu.
\newblock Applications of the generalized singular-value decomposition method
  on the eigenproblem using the incomplete boundary element formulation.
\newblock {\em Journal of Sound and Vibration}, 235(5):813--845, 2000.

\bibitem{KUSSMAUL:1969}
R.~Kussmaul.
\newblock Ein numerisches {V}erfahren zur {L}\"osung des {N}eumannschen
  {A}ussenraumproblems f\"ur die {H}elmholtzsche {S}chwingungsgleichung.
\newblock {\em Computing (Arch. Elektron. Rechnen)}, 4:246--273, 1969.

\bibitem{lenoir}
M.~Lenoir, M.~Vullierme-Ledard, and C.~Hazard.
\newblock Variational formulations for the determination of resonant states in
  scattering problems.
\newblock {\em SIAM J. Math. Anal.}, 23(3):579--608, 1992.

\bibitem{liu}
X.~Liu and S.~Oishi.
\newblock Verfiied eigenvalue evaluation for the laplacian over polygonal
  domains of arbitrary shape.
\newblock {\em SIAM J. Numer. Anal.}, 51:634--1654, 2013.

\bibitem{bruno2009_2}
Mark Lyon and Oscar~P. Bruno.
\newblock High-order unconditionally stable {FC-AD} solvers for general smooth
  domains {II}. elliptic, parabolic and hyperbolic {PDEs}; theoretical
  considerations.
\newblock {\em Journal of Computational Physics}, 229(9):3358--3381, 2010.

\bibitem{MARTENSEN:1963}
E~Martensen.
\newblock {\"U}ber eine {M}ethode zum r\"aumlichen {N}eumannschen {P}roblem mit
  einer {A}nwendung f\"ur torusartige {B}erandungen.
\newblock {\em Acta Math.}, 109:75--135, 1963.

\bibitem{masonchebyshev}
John~C. Mason and David~C. Handscomb.
\newblock Chebyshev polynomials. 2003.
\newblock {\em Chapman\&Hall, London}.

\bibitem{moiseyev}
N.~Moiseyev.
\newblock Quantum theory of resonances: Calculating energies, widths and
  cross-sections by complex scaling.
\newblock {\em Physics Reports}, 302:221--293, 1998.

\bibitem{molerreport}
C.~Moler.
\newblock Accurate bounds for the eigenvalues of the laplacian and applications
  to rhombical domains.
\newblock Technical Report CS-TR-69-121, Department of Computer Science,
  Stanford University, 1969.

\bibitem{rokhlin1993diagonal}
Vladimir Rokhlin.
\newblock Diagonal forms of translation operators for the helmholtz equation in
  three dimensions.
\newblock {\em Applied and Computational Harmonic Analysis}, 1(1):82--93, 1993.

\bibitem{sag1964numerical}
T.~W. Sag and G.~Szekeres.
\newblock Numerical evaluation of high-dimensional integrals.
\newblock {\em Math. Comput}, 18:245--253, 1964.

\bibitem{steinbachDirichlet}
O.~Steinbach and G.~Unger.
\newblock A boundary element method for the {D}irichlet eigenvalue problem of
  the {L}aplace operator.
\newblock {\em Numer. Math.}, 113(2):281--298, 2009.

\bibitem{steinbachunger}
O.~Steinbach and G.~Unger.
\newblock Convergence analysis of a {G}alerkin boundary element method for the
  {D}irichlet {L}aplacian eigenvalue problem.
\newblock {\em SIAM J. Numer. Anal.}, 50(2):710--728, 2012.

\bibitem{strauss2008partial}
Walter~A Strauss.
\newblock {\em Partial differential equations: An introduction}.
\newblock Wiley, 2008.

\bibitem{trefethen2005}
L.~Trefethen and T.~Betcke.
\newblock Reviving the method of particular solutions.
\newblock {\em SIAM Review}, 47-3:469--491, 2005.

\bibitem{wendland1979}
W.~Wendland, E.~Stephan, and G.~Hsiao.
\newblock On the integral equation method for the plane mixed boundary value
  problem of the laplacian.
\newblock {\em Mathematical Methods in the Applied Sciences}, 1:265--321, 1979.

\bibitem{weyl1911asymptotische}
Hermann Weyl.
\newblock {\"U}ber die asymptotische verteilung der eigenwerte.
\newblock {\em Nachrichten von der Gesellschaft der Wissenschaften zu
  G{\"o}ttingen, Mathematisch-Physikalische Klasse}, 1911:110--117, 1911.

\bibitem{weyl1912asymptotische}
Hermann Weyl.
\newblock Das asymptotische verteilungsgesetz der eigenwerte linearer
  partieller differentialgleichungen (mit einer anwendung auf die theorie der
  hohlraumstrahlung).
\newblock {\em Mathematische Annalen}, 71(4):441--479, 1912.

\bibitem{wigley1964}
N.~M. Wigley.
\newblock Asymptotic expansions at a corner of solutions of mixed boundary
  value problems.
\newblock {\em Journal of Mathematics and Mechanics}, 13:549--576, 1964.

\bibitem{wigley1970mixed}
N.~M. Wigley.
\newblock Mixed boundary value problems in plane domains with corners.
\newblock {\em Mathematische Zeitschrift}, 115(1):33--52, 1970.

\bibitem{woodworth1991derivation}
M.~B. Woodworth and A.~D. Yaghjian.
\newblock Derivation, application and conjugate gradient solution of
  dual-surface integral equations for three-dimensional, multi-wavelength
  perfect conductors.
\newblock {\em Progress In Electromagnetics Research}, 5:103--129, 1991.

\bibitem{wright1992differential}
Kenneth Wright.
\newblock Differential equations for the analytic singular value decomposition
  of a matrix.
\newblock {\em Numerische Mathematik}, 63(1):283--295, 1992.

\bibitem{yan1988integral}
Yeli Yan, Ian~H Sloan, et~al.
\newblock {\em On integral equations of the first kind with logarithmic
  kernels}.
\newblock University of NSW, 1988.

\bibitem{zhao2014robust}
Lin Zhao and Alex Barnett.
\newblock Robust and efficient solution of the drum problem via {Nystrom}
  approximation of the fredholm determinant.
\newblock {\em arXiv preprint arXiv:1406.5252}, 2014.

\end{thebibliography}

\end{document}